\tikzstyle{red}=[fill=red, draw=black, shape=circle]
\tikzstyle{green}=[fill=white, draw=white, shape=rectangle, tikzit draw=black]
\tikzstyle{arrow}=[->, yshift=40pt]
\tikzstyle{stealth}=[->, >=stealth]
\tikzstyle{dashouille}=[->, dashed]
\newcommand{\N}{\mathsf{N}}
\newcommand{\T}{\mathsf{T}}
\newcommand{\E}{\mathsf{E}}
\newcommand{\C}{\mathsf{C}}
\newcommand{\Q}{\mathsf{Q}}
\newcommand{\LN}{\mathsf{LN}}
\newcommand{\LA}{\mathsf{LA}}
\newcommand{\LX}{\mathsf{LX}}
\newcommand{\A}{\mathsf{A}}
\newcommand{\B}{\mathsf{B}}
\newcommand{\LT}{\mathsf{LT}}
\newcommand{\LQ}{\mathsf{LQ}}
\newcommand{\X}{\mathsf{X}}
\newcommand{\Y}{\mathsf{Y}}
\newcommand{\W}{\mathsf{W}}
\newcommand{\M}{\mathsf{M}}
\renewcommand{\L}{\mathsf{L}}
\renewcommand{\P}{\mathsf{P}}
\renewcommand{\ker}{\mathsf{ker}}
\newcommand{\PAN}{\mathsf{P_AN}}
\newcommand{\PAQ}{\mathsf{P_AQ}}
\newcommand{\PAT}{\mathsf{P_AT}}
\newcommand{\K}{\mathsf{K}}
\newcommand{\I}{\mathsf{I}}
\title{Admissibility of Localizations of Crossed Modules}
\author{Olivia Monjon}
\author{J\'er\^ome Scherer}
\address{Mathematics, Ecole Polytechnique F\'ed\'erale de Lausanne, EPFL, Switzerland}
\email{olivia.monjon@gmail.com}
\address{Mathematics, Ecole Polytechnique F\'ed\'erale de Lausanne, EPFL, Switzerland}
\email{jerome.scherer@epfl.ch}
\author{Florence Sterck}
\address{Institut de Recherche en Mathématique et Physique, Universit\'e catholique de Louvain, Belgium}
\email{ florence.sterck@uclouvain.be}
\subjclass[2020]{18G45, 55P60, 18E50, 55R70, 18E13. }
\keywords{Crossed modules, Localization functors, Admissibility, Regular epimorphisms, Conditional flatness, Nullifications.} 
\newtheorem{theorem}{Theorem}[section]
\newtheorem{lemma}[theorem]{Lemma}
\newtheorem{prop}[theorem]{Proposition}
\newtheorem{corollary}[theorem]{Corollary}
\theoremstyle{definition}
\newtheorem{definition}[theorem]{Definition}
\newtheorem{example}[theorem]{Example}
\newtheorem{remark}[theorem]{Remark}
\begin{document}
\begin{abstract} 
The correspondence between the concept of conditional flatness and admissibility in the sense of Galois appears in the context of localization functors in any semi-abelian category admitting a fiberwise localization. It is then natural to wonder what happens in the category of crossed modules where fiberwise localization is not always available. In this article, we establish an equivalence between conditional flatness and admissibility in the sense of Galois (for the class of regular epimorphisms) for regular-epi localization functors. We use this equivalence to prove that nullification functors are admissible for the class of regular epimorphisms, even if the kernels of their localization morphisms are not acyclic.

\end{abstract}
\maketitle

\section*{Introduction} 
 It is a natural question to ask whether the pullback of a nice extension inherits these nice properties. When working with localization functors or reflections one particularly nice feature for an extension is flatness. We say that an extension is $\L$-flat, for a localization functor $\L$, if applying $\L$ to the extension yields another extension, see \cref{def:flat}. The question is thus to understand when the pullback of an $\L$-flat extension is again $\L$-flat.

Such questions have been studied first in a homotopical context by Berrick and Farjoun, \cite{MR1997044}. For homotopical localization functors in the category of topological spaces (in the sense of Bousfield, \cite{MR1481817}, see also Farjoun's book \cite{MR1392221}), preservation of $\L$-flatness (for fiber sequences) under pullbacks was shown to be equivalent for $\L$ to be a so-called nullification functor. The situation is surprisingly more delicate in the category of groups. Farjoun and the second author proved for example that all nilpotent quotient functors have this nice property, which they called \emph{conditional flatness}, see \cite{FS}.

The standard strategy to establish conditional flatness for a localization functor consists in a few reduction steps culminating in a simpler form, which Gran identified as admissibility in the sense of Galois for the class of regular epimorphisms \cite[Proposition~3.3]{MR1312759}. This shifted the study of conditional flatness in homotopy theory to that of admissibility in semi-abelian categories, see \cite{GranScherer}. Admissibility had been introduced by Janelidze and Kelly in \cite{MR1312759} and has since then played a central role in the categorical study of extensions, let us mention for example Everaert, Gran, and Van der Linden's work in \cite{EGV}.

In this article we study admissibility for localization functors in the category of crossed modules (of groups), a category of interest to both topologists due to Whitehead's work on connected $2$-types, \cite{whitehead}, and algebraists since Brown and Spencer \cite{BS} proved the equivalence between crossed modules
and internal groupoids in the category of groups (a result that they credit
to Verdier). This equivalence relates two interesting notions and allows one to deal with the concept of internal groupoid in an alternative way, that is useful for computations.
Moreover, crossed modules form a semi-abelian category in the sense of Janelidze, M\'arki and Tholen, \cite{JMT}.
We adopt the algebraic point of view here and continue our work started in \cite{MSS}. Indeed, among the reduction steps we have mentioned above, the first one calls on fiberwise localization techniques. For group theoretical localization and homotopy localization functors, it allows one to reduce the study to extensions with local kernel (fiber). Fiberwise localization techniques are available in the category of groups thanks to work of Casacuberta and Descheemaeker, \cite{Casacuberta}, but we proved in \cite{MSS} that they are not at hand in general for crossed modules. Our aim in this article is thus to modify the strategy to be able to study admissibility in this setting.

We focus on localization functors such that the co-augmentation morphism $\ell^\T\colon \T \to \L\T$ is a regular epimorphism for all crossed modules $\T$. We call them \emph{regular-epi localization} and notice that many examples of interest are provided by nullification functors, as defined in \cref{null}. Any crossed module $\A$ determines a nullification functor $\P_\A$ that ``kills'' all morphisms from $\A$ and there are other regular-epi localization functors such as abelianization. One first important observation which makes the reduction strategy viable is that, even though fiberwise localization does not exist in general, even for nullification functors, we can use this tool for \emph{certain} extensions.

\medskip

\noindent
{\bf Lemma~\ref{lem:fiberwiseforpb}.}
 \emph{Let $\L$ be a regular-epi localization.  Let\begin{equation}
\begin{tikzpicture}[descr/.style={fill=white},scale= 0.8, baseline=(A.base)] 
\node (A) at (0,0) {$\T$};
\node (B) at (2.5,0) {$\Q$};
\node (C) at (-2.5,0) {$\N$};
\node (O1) at (-5,0) {$1$};
\node (O2) at (5,0) {$1$};
\path[-stealth,font=\scriptsize]
(C.east) edge node[above] {$\kappa$} (A.west)
(O1) edge node[above] {$ $} (C)
(B) edge node[above] {$ $} (O2)
(A) edge node[above] {$\alpha$} (B);
\end{tikzpicture} 
 \end{equation} be an $\L$-flat exact sequence of crossed modules and $g \colon \Q' \to \Q$ a morphism of crossed modules. Then, we can construct the fiberwise localization of the pullback of (1) along $g$:
 \[ 
 \begin{tikzpicture}[descr/.style={fill=white},scale = 0.8,baseline=(A.base)]
\node (A) at (0,0) {$\N$};
\node (B) at (0,2) {$\N$};
\node (C) at (2.5,2) {$\T'$};
\node (D) at (2.5,0) {$\T$};
\node (E) at (5,2) {$\Q'$};
\node (F) at (5,0) {$\Q$};
\node (O1) at (-2.5,2) {$1$};
\node (O1') at (-2.5,0) {$1$};
\node (O2) at (7.5,2) {$1$};
\node (O2') at (7.5,0) {$1$};
  \path[-stealth,font=\scriptsize]
(O1) edge node[above] {$ $} (B)
(E) edge node[above] {$ $} (O2)
(F) edge node[above] {$ $} (O2')
(O1') edge node[above] {$ $} (A)
  (A.east) edge node[below] {$\kappa$} (D.west) 
 (C.south) edge node[right] {$\pi_{\T}$} (D.north) 
 (B.east)  edge node[above] {$\kappa'$} (C.west)
 (E.south) edge node[right] {$g$} (F.north)
 (C.east)  edge node[above] {$\pi_{\Q'}$} (E.west)
 (D.east) edge node[below] {$\alpha$} (F.west);
  \draw[commutative diagrams/.cd, ,font=\scriptsize]
(B) edge[commutative diagrams/equal] (A) ;
\end{tikzpicture}
\]
}

This allows us to relate conditional flatness with admissibility, in the same spirit as what was done in the category of groups, \cite{FS}, or in the wider context of semi-abelian categories where fiberwise localization exists, \cite{GranScherer}.
A localization functor $\L$ is said to be \emph{admissible} for the class of regular epimorphisms if it preserves any pullback of the form \[
\begin{tikzpicture}[descr/.style={fill=white},yscale=0.8]
\node (A) at (0,0) {$\LT$};
\node (B) at (0,2) {$\T'$};
\node (C) at (2,2) {$\Q$};
\node (D) at (2,0) {$\LQ$};
  \path[-stealth,font=\scriptsize]
 (B.south) edge node[left] {$\pi_\LT$} (A.north) 
 (C.south) edge node[right] {$\ell^\Q$} (D.north) 
  (B.east) edge node[above] {$\pi_{\Q}$} (C.west) 
 (A.east) edge node[below] {$\alpha$} (D.west);
\end{tikzpicture} 
\] where $\alpha$ is a regular epimorphism between $\L$-local objects. 

\medskip

\noindent
{\bf Theorem~\ref{finalstep}.}
\emph{
Let $\L$ be a regular-epi localization functor. Then the following statements are equivalent
\begin{enumerate}
\item $\L$ is conditionally flat;
\item $\L$ is admissible for the class of regular epimorphisms.
\end{enumerate}
}
\medskip

One difference between groups and crossed modules and maybe the main source of complication is highlighted by the behavior of kernels. This was already the reason why one cannot always construct fiberwise localization and we were also surprised to find examples of nullification functors for which the kernel of the nullification morphism $\ell^\T\colon \T \to \PAT$ is not always $\P_\A$-acyclic, see \cite[Proposition~4.6]{MSS}. For groups and spaces, this property actually characterizes nullification functors.

Still we prove here that acyclic kernels implies admissibility and in \cref{prop:acyclicimpliesnullification}, that if the kernels of the localization morphisms are $\L_f$-acyclic, then $\L_f$ is a nullification functor. Well behaved nullification functors are therefore admissible, but what about arbitrary nullification functors, for which fiberwise localization does not necessarily exist and for which the kernel of the nullification is not necessarily acyclic? By carefully looking at the inductive construction of $\PAT$ we show our main result, namely that all nullification functors are admissible.

\medskip

\noindent
{\bf Theorem~\ref{thm:PAadmissible}.}
\emph{Let $\A$ be any crossed module. The nullification functor $\P_\A$ is admissible for the class of regular epimorphisms.} 

\medskip

We end this introduction with a short outline. 
The first section consists of preliminaries that we use in the rest of the article. Then in Section~2 we introduce $\L$-flat exact sequences and conditionally flat localization functors in the context of crossed modules. We show how to construct fiberwise localization of $\L$-flat exact sequences. The third section is essential in the development of a simpler characterisation of conditional flatness: It provides an equivalence with the notion of admissibility in the specific context of regular-epi localization functors. In Section~4 the link between $\L$-acyclicity and admissibility is established and the last section is devoted to the proof that every nullification functor is admissible.

\medskip

\noindent
{\bf Acknowledgments.} We would like to thank Marino Gran for sharing his insight about admissibility.


\section{Preliminaries}

\subsection{The semi-abelian category of crossed modules}
In this subsection, following Norrie \cite{Norrie} and Brown-Higgins \cite{BH}, we provide the basic definitions and notation concerning crossed modules.

\begin{definition}\label{xmod grp}\cite{whitehead}
A \emph{crossed module of groups} is a pair of groups $T_1$ and $T_2$, an action by group automorphisms of $T_2$ on $T_1$, denoted by $T_2 \times T_1 \to T_1 \colon (b,t) \mapsto \;^{b}t$, together with a group homomorphism $\partial^\T \colon T_1 \to T_2$ such that for any $b$ in $T_2$ and any $t$, $s$ in $T_1$,
\begin{equation}
\partial^\T(\;^b t) = b\partial^\T(t)b^{-1},
\end{equation}
\begin{equation}\label{peiffer}
^{\partial^\T(t)}s = tst^{-1}.
\end{equation}
\end{definition}

Hence we often write a crossed module as a triple $(T_1,T_2,\partial^\T)$, or simply $\T$ for short, and we refer sometimes to $\partial^\T$ as the \emph{connecting morphism}. 

\begin{definition} \label{morphxmod}
Let $\N := (N_1,N_2,\partial^{\N})$ and $\M := (M_1,M_2,\partial^{\M})$ be two crossed modules. A \emph{morphism of crossed modules} $\alpha\colon \N \to \M$ is a pair of group homomorphisms $\alpha_1\colon N_1 \rightarrow M_1$ and $\alpha_2\colon N_2 \rightarrow M_2$ such that the two following diagrams commute
\[
\begin{tikzpicture}[descr/.style={fill=white},scale=0.8,baseline=(A.base)]
\node (A) at (0,0) {$N_2$};
\node (B) at (0,2) {$N_1$};
\node (C) at (2,2) {$M_1$};
\node (D) at (2,0) {$M_2$};
  \path[-stealth,font=\scriptsize]
 (B.south) edge node[left] {$\partial^{\N}$} (A.north) 
 (C.south) edge node[right] {$\partial^{\M}$} (D.north) 
  (B.east) edge node[above] {$\alpha_1$} (C.west) 
 (A.east) edge node[below] {$\alpha_2$} (D.west);
\end{tikzpicture}\hspace{1cm} \begin{tikzpicture}[descr/.style={fill=white},yscale=0.8,baseline=(A.base)]
\node (A) at (0,0) {$M_2 \times M_1$};
\node (B) at (0,2) {$N_2 \times N_1$};
\node (C) at (2,2) {$N_1$};
\node (D) at (2,0) {$M_1.$};
  \path[-stealth,font=\scriptsize]
 (B.south) edge node[left] {$(\alpha_2, \alpha_1)$} (A.north) 
 (C.south) edge node[right] {$\alpha_1$} (D.north) 
  (B.east) edge node[above] {$ $} (C.west) 
 (A.east) edge node[below] {$ $} (D.west);
\end{tikzpicture}
\]
where the horizontal arrows in the diagram on the right are the respective group actions of the two crossed modules.
\end{definition}

We write $\mathsf{XMod}$ for the category of crossed modules of groups. 

\begin{remark}\label{lemma_lefttruncation}
    There is an embedding of the category of groups in this category via two functors which are respectively left and right adjoint to the truncation functor $Tr \colon \sf XMod \to Grp$ that sends a crossed module $\T := (T_1,T_2,\partial^\T)$ to $T_2$. The functor $\sf X\colon Grp \to XMod$ which sends a group $G$ to the crossed module ${\sf X} G = ( 1,G, 1)$ reduced to the group $G$ at level~$2$ is the left adjoint functor and the functor ${ \sf R \colon  Grp \to XMod } \colon G \mapsto (G,G,Id_G) $ is the right ajoint functor. This will help us to import group theoretical results into $\sf XMod$.
\end{remark}



There is an obvious notion of subcrossed module, see \cite{Norrie}. One simply requires the subobject to be made levelwise of subgroups, the connecting
homomorphism and the action are induced by the given connecting homomorphism and action. The notion of normality is less obvious.


\begin{definition}\label{defnormalcrossed}
A subcrossed module $\N := (N_1,N_2,\partial^\N)$ of $\T := (T_1,T_2,\partial^\T)$ is \emph{normal} if the following three conditions hold
\begin{enumerate}
    \item $N_2$ is a normal subgroup of $T_2$;
    \item for any $t_2 \in T_2$ and $n_1 \in N_1$, we have $^{t_2}n_1 \in N_1$;
    \item $[N_2,T_1]:= \langle \;^{n_2}t_1t_1^{-1} \mid  t_1  \in T_1, n_2 \in N_2 \rangle \subseteq N_1$.
\end{enumerate}
\end{definition}


In contrast to limits, which are built component-wise, colimits are generally more delicate to construct. In particular, the construction of cokernels is not straightforward,
but when ${\sf N}$ is a normal subcrossed module of $\sf T$ the cokernel is simply the levelwise quotient by the normal subgroups $N_1 \triangleleft T_1$ and $N_2 \triangleleft T_2$. \newline

The category of crossed modules shares many nice properties with the category of groups. The traditional homological lemmas, \cite{BB}, the Split Short Five Lemma, \cite{Bourn}, and the Noether Isomorphism Theorems, \cite{BB}, hold. One can recognize pullbacks by looking at kernels or cokernels, \cite[Lemmas 4.2.4 and 4.2.5]{BB}, and in fact $\sf Xmod$ is a semi-abelian category, as introduced by Janelidze, M\'arki, and Tholen in \cite{JMT}. This is shown in \cite{JMT}. There is one result we will use several times in this article, namely \cite[Lemma~4.2.4]{BB}, which we recall now.

\begin{prop}\label{usefulproposemiab}
Let $\mathcal{C}$ be a semi-abelian (or homological) category. Consider the following diagram of exact rows:
\[
\begin{tikzpicture}[descr/.style={fill=white},baseline=(A.base),scale=0.8] 
\node (A) at (0,0) {$T'$};
\node (B) at (2.5,0) {$Q'$};
\node (C) at (-2.5,0) {$N'$};
\node (A') at (0,-2) {$T$};
\node (B') at (2.5,-2) {$Q$};
\node (C') at (-2.5,-2) {$N$};
\node (O1) at (-4.5,0) {$1$};
\node (O1') at (-4.5,-2) {$1$};
\node (O2) at (4.5,0) {$1$};
\node (X) at (1.25,-1) {$(2)$};
\path[-stealth,font=\scriptsize]
(O1) edge node[above] {$ $} (C)
(B) edge node[above] {$ $} (O2)
(O1') edge node[above] {$ $} (C')
(B.south) edge node[right] {$ w$}  (B'.north)
 (C.south) edge node[left] {$ u $}  (C'.north)
(A.south) edge node[left] {$ v$} (A'.north)
(C'.east) edge node[above] {$\kappa$} (A'.west)
(A') edge node[above] {$\alpha$} (B') ([yshift=2pt]A'.east)
(C.east) edge node[above] {$\kappa'$} (A.west)
(A) edge node[above] {$\alpha'$} (B);
\end{tikzpicture} 
\]

Then the following statements hold.
\begin{enumerate}
    \item[(1)] If $u$ is an isomorphism then (2) is a pullback.
    \item[(2)] If $u$ and $w$ are regular epimorphisms then $v$ is also a regular epimorphism.
\end{enumerate}
\end{prop}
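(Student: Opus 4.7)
My plan is to handle both parts by forming the pullback $P := T \times_Q Q'$ of $\alpha$ along $w$, with projections $\pi_T \colon P \to T$ and $\pi_{Q'} \colon P \to Q'$, and the induced comparison $\phi \colon T' \to P$ satisfying $\pi_T \phi = v$ and $\pi_{Q'} \phi = \alpha'$. Since regular epimorphisms are stable under pullback in the regular category of crossed modules, $\pi_T$ is a regular epimorphism whenever $w$ is. Part~(1) then reduces to showing that $\phi$ is an isomorphism, while part~(2) reduces to showing that $\phi$ is a regular epimorphism, since $v = \pi_T \phi$.

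The common first step is to identify the lower row $1 \to N \to P \to Q' \to 1$ as a short exact sequence. Kernels are stable under pullback, so $\kappa$ induces a canonical kernel $N \hookrightarrow P$ of $\pi_{Q'}$, and the map from $N'$ that $\phi$ induces on kernels is precisely $u$. To see that $\pi_{Q'}$ is a regular epimorphism we use the factorisation $\alpha' = \pi_{Q'} \phi$: the morphism $\alpha'$ is a regular epimorphism by exactness of the top row, and in any regular category, if $f \circ g$ is a regular epimorphism then so is $f$; hence $\pi_{Q'}$ is a regular epimorphism.

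We are now in the situation of a comparison between two short exact sequences, via the triple $(u, \phi, \mathrm{id}_{Q'})$. For part~(1), both outer maps are isomorphisms, so the Short Five Lemma in semi-abelian categories forces $\phi$ to be an isomorphism, whence square~(2) is a pullback. For part~(2), both outer maps are regular epimorphisms, so the regular-epimorphism analogue of the Short Five Lemma -- a standard result in the semi-abelian setting, see \cite{BB} -- gives $\phi$ a regular epimorphism, and therefore $v = \pi_T \phi$ is a regular epimorphism as well.

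The main subtlety I anticipate is the observation that $\pi_{Q'}$ is a regular epimorphism: we do not assume $\alpha$ itself is one, so $\pi_{Q'}$ is not a priori a pullback of a regular epimorphism. Right-cancellation of regular epimorphisms through $\alpha'$ resolves this cleanly, and once this is in hand both parts reduce to direct appeals to standard lemmas for morphisms of short exact sequences in a semi-abelian category.
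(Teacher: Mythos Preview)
The paper does not prove this proposition; it is quoted without proof from \cite{BB} as a background fact, so there is no argument in the paper to compare yours against. Your proof of (1) is correct: forming the pullback $P$, identifying $\ker(\pi_{Q'}) \cong N$, obtaining that $\pi_{Q'}$ is a regular epimorphism by right-cancellation through $\alpha'$, and then invoking the Short Five Lemma for the triple $(u,\phi,\mathrm{id}_{Q'})$ is exactly the standard route in a homological category.

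Your proof of (2), however, is circular at the last step. After reducing to the comparison $(u,\phi,\mathrm{id}_{Q'})$ you appeal to ``the regular-epimorphism analogue of the Short Five Lemma'' to conclude that $\phi$ is a regular epimorphism. But that analogue \emph{is} statement~(2) of the proposition, merely instantiated with $w=\mathrm{id}$; you are invoking the result you are meant to prove. The reduction is genuine progress --- note that the bottom row of the original diagram is not short exact on the right, so passing to $P$ is what puts you in a position to compare two honest short exact sequences --- but you still owe an argument for the special case.

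Here is one way to close the gap. Factor $\phi = m \circ e$ with $e$ a regular epimorphism and $m$ a monomorphism, and set $\beta := \pi_{Q'} \circ m$. Right-cancellation (as you already used) shows $\beta$ is a regular epimorphism, so $I := \mathrm{dom}(m)$ sits in a short exact sequence $1 \to N_I \to I \to Q' \to 1$. The restriction $m_N \colon N_I \to N$ is a monomorphism because $m$ is; and since $m_N \circ e_N = u$ is a regular epimorphism, right-cancellation again makes $m_N$ a regular epimorphism, hence an isomorphism. Now the ordinary Short Five Lemma applied to $(m_N, m, \mathrm{id}_{Q'})$ gives $m$ an isomorphism, so $\phi$ is a regular epimorphism and $v = \pi_T \circ \phi$ is a composite of regular epimorphisms.
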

\subsection{Localization functors}
In this subsection we recall the definition of localization functors in the category of crossed modules. We also recall some important properties of such functor as well as some examples.

\begin{definition}\label{localization}
A \emph{localization} functor in the category of crossed modules is a coaugmented idempotent functor $\sf L \colon XMod \to XMod$. The coaugmentation $\ell \colon {\sf Id} \rightarrow \L$ is a natural transformation such that $\ell^{\LX}$ and $ \L \ell^\X$ are isomorphisms.
\end{definition}

In particular we have $\ell^{\LX} = \L \ell^\X$, see \cite[Proposition~1.1]{MR1796125}.

\begin{definition}\label{local}
Let $\sf L$ be a localization functor. A crossed module $\sf T$ is $\sf L$-\emph{local}
if $\ell^{\sf T}\colon \sf T \rightarrow \sf L \sf T$ is an
isomorphism. A morphism $f\colon \sf N \rightarrow \sf M$ is an $\sf L$-\emph{equivalence} 
if ${\sf L} f$ is an isomorphism.
\end{definition}

We recall a few basic and useful closure properties of $\L$-equivalences.

\begin{lemma}\label{lemma_colimitandequivalences}
\begin{enumerate}
    \item The pushout of an $\sf L$-equivalence is an $\L$-equivalence.
     \item The composition of $\L$-equivalences is an $\L$-equivalence.
    \item A $\kappa$-filtered colimit of a diagram $\sf T_\beta$ of $\sf L$-equivalences $\sf T_\beta \rightarrow  T_{\beta +1}$ for all successor ordinals 
    $\beta +1 < \kappa$ yields an $\sf L$-equivalence $\sf T_0 \rightarrow T_\kappa = \sf colim_{\beta < \kappa} T_\beta$.
    \item Let ${\sf F}$ be an $I$-indexed diagram of $\sf L$-equivalences in the category of morphisms of crossed modules. 
    Then the colimit $\sf colim_I F$ is an $\L$-equivalence.
\end{enumerate}
\end{lemma}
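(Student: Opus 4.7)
All four statements can be proved uniformly via the universal property characterization of $\L$-equivalences: a morphism $f\colon \X \to \Y$ is an $\L$-equivalence if and only if, for every $\L$-local crossed module $\W$, precomposition with $f$ induces a bijection $f^*\colon \mathsf{Hom}(\Y,\W) \to \mathsf{Hom}(\X,\W)$. This is immediate from the fact that $\L$ is the reflector onto the full subcategory of $\L$-local crossed modules (so $\mathsf{Hom}(\X,\W) \cong \mathsf{Hom}(\L\X,\W)$ for local $\W$), combined with the Yoneda lemma. The plan is to reduce each item to a straightforward bijection of Hom-sets into an arbitrary local object.

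For item (2), the argument is purely functorial: if $f$ and $g$ are $\L$-equivalences, then $\L(g\circ f) = \L g \circ \L f$ is a composite of isomorphisms, hence an isomorphism. For item (1), I would take a pushout square with $f\colon \X \to \Y$ an $\L$-equivalence and an arbitrary $g\colon \X \to \Z$, producing the cocartesian morphism $\bar f\colon \Z \to \Y\cup_\X\Z$. For any $\L$-local $\W$, the universal property of the pushout identifies $\mathsf{Hom}(\Y\cup_\X\Z,\W)$ with pairs of morphisms $\Y\to\W$ and $\Z\to\W$ that agree on $\X$; since $f$ is an $\L$-equivalence, the morphisms $\Y\to\W$ are in bijection with morphisms $\X\to\W$, and that datum is redundant once $\Z\to\W$ is fixed. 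Hence $\bar f^*\colon\mathsf{Hom}(\Y\cup_\X\Z,\W)\to\mathsf{Hom}(\Z,\W)$ is a bijection, so $\bar f$ is an $\L$-equivalence.

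For items (3) and (4) the same pattern applies, using that a colimit $\mathsf{colim}_I \F$ represents the limit of representable functors $\mathsf{Hom}(\F(-),\W)$. In (3), for $\W$ local, a cone from the diagram $\T_\beta$ to $\W$ is uniquely determined by its component $\T_0\to\W$, because every successor transition $\T_\beta\to\T_{\beta+1}$ is an $\L$-equivalence and limit transitions for $\kappa$-filtered colimits are also forced by induction on ordinals (maps out of $\T_\lambda = \mathsf{colim}_{\beta<\lambda}\T_\beta$ for a limit ordinal $\lambda$ are controlled by the previous stages). Hence $\mathsf{Hom}(\T_\kappa,\W) \cong \mathsf{Hom}(\T_0,\W)$, so $\T_0\to\T_\kappa$ is an $\L$-equivalence. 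For (4), given an $I$-diagram $\F$ of morphisms $f_i\colon\X_i\to\Y_i$ with each $f_i$ an $\L$-equivalence, the induced morphism $\mathsf{colim}\,f_i\colon \mathsf{colim}\,\X_i \to \mathsf{colim}\,\Y_i$ satisfies, for every local $\W$, that $\mathsf{Hom}(\mathsf{colim}\,\Y_i,\W)$ is the limit over $I$ of $\mathsf{Hom}(\Y_i,\W)$, and the pointwise bijections $\mathsf{Hom}(\Y_i,\W)\cong \mathsf{Hom}(\X_i,\W)$ assemble into a bijection of the two limits.

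The main point to be careful about is that colimits in $\mathsf{XMod}$ are subtle (as noted in the preliminaries, cokernels are only simple in the normal case), so one must invoke only the universal property of colimits and not attempt any explicit construction. The argument above is robust precisely because it uses nothing more than Hom-set representability, which holds in any cocomplete category; the semi-abelian structure of $\mathsf{XMod}$ plays no role in these closure properties, so the proof is essentially formal and I do not anticipate any genuine obstacle beyond organising the Hom-set bookkeeping cleanly.
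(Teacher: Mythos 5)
Your proposal is correct. Note that the paper does not actually write out a proof of this lemma: it simply remarks that it is the analogue of Hirschhorn's Propositions 1.2.20 and 1.2.21 and leaves the verification to that reference. Your route is the standard formal one and is, if anything, better suited to the purely categorical setting here: you first invoke the characterization that $f$ is an $\L$-equivalence if and only if $\mathsf{Hom}(f,\W)$ is a bijection for every $\L$-local $\W$ (this does require a short argument from the idempotency axioms of \cref{localization} -- that $\ell^\X$ is initial among maps to local objects, plus Yoneda on the subcategory of local objects -- and it would be worth stating it as a separate lemma, since the paper only records the analogous fact for $\L_f$ in \cref{f-local}), and then each of the four items reduces to the fact that $\mathsf{Hom}(-,\W)$ turns the relevant colimit (pushout, transfinite composition with the usual continuity convention $\T_\lambda=\mathsf{colim}_{\beta<\lambda}\T_\beta$ at limit ordinals, or colimit in the arrow category) into the corresponding limit of sets, under which bijections are preserved. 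This buys a self-contained proof valid in any cocomplete category equipped with a coaugmented idempotent functor, with no appeal to the model-categorical machinery behind the cited reference; the only caveat is to make the auxiliary characterization and the limit-ordinal continuity assumption in item (3) explicit, exactly as you flagged.
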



Sometimes a localization functor $\L$ is associated to a full reflexive subcategory $\mathcal L$ of $\sf XMod$.
The pair of adjoint functors ${\sf U\colon \mathcal L \leftrightarrows \sf XMod\colon \sf F}$ provides a localization functor $\sf L = \sf FU$, as Cassidy, H\'ebert, and Kelly do in \cite{MR779198}.
Some other times there is a morphism $f$ one wishes to invert so as to construct a localization functor often written ${\sf L}_f$.

\begin{definition}\label{f-local}
Let $f$ be a morphism of crossed modules. A crossed module
$\sf T$ is \emph{${\sf L}_f$-local} if $Hom(f, \T)$ is an isomorphism.
A morphism $g$ in $\sf XMod$ is an \emph{${\sf L}_f$-equivalence} if
$Hom(g, \T)$ is an isomorphism for any ${\sf L}_f$-local crossed module~$\sf T$.
\end{definition}

Such localization functors exist in $\sf XMod$, see for example
Bousfield's foundational work \cite{MR478159}.
Local objects and local equivalences coincide then with
the notions introduced in Definition~\ref{local}. Proposition~\ref{lemma_colimitandequivalences} is the analogue of Hirschhorn's \cite[Proposition~1.2.20 and Proposition 1.2.21]{Hirschhorn}. 


If the codomain of the morphism $f$ is the trivial crossed module, the functor $\L_f$ is of particular interest.

\begin{definition}\label{null}
Let $\sf A$ be a crossed module and $f$ be the morphism $\sf A \to 1$. The localization functor ${\sf L}_f$ is then written $\sf P_A$ and is called a \emph{nullification} functor. An $f$-local object is called $\sf A$-\emph{null}, or $\sf A$-\emph{local} and a crossed module $\sf T$ is $\sf A$-\emph{acyclic} if $\sf P_A T = 1$. The localization morphism $\ell^T\colon \T \to \PAT$ is written $p^\T$.
\end{definition}


\begin{prop}\label{propnullify}
Let $\sf A$ and $\sf T$ be crossed modules. Then there exists an ordinal $\lambda$ depending on $\sf A$ such that $\sf P_A T$ is
constructed as a transfinite filtered colimit of a diagram of the form 
$\sf T= \sf T_0 \to T_1 \rightarrow 
\dots \rightarrow T_\beta \rightarrow \dots$ for $\beta < \lambda$ where all morphisms are $\sf P_A$-equivalences and regular epimorphims.
\end{prop}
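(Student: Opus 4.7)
The plan is to build $\P_\A\T$ explicitly by a transfinite small object argument in the style of Bousfield, and to read off both properties (regular epimorphism and $\P_\A$-equivalence) from the construction at each stage. Set $\T_0 := \T$. At a successor ordinal, form the pushout
\begin{equation*}
\begin{tikzpicture}[descr/.style={fill=white},scale=0.9,baseline=(A.base)]
\node (A) at (0,2) {$\coprod_{f \in \mathrm{Hom}(\A,\T_\beta)} \A$};
\node (B) at (4,2) {$\T_\beta$};
\node (C) at (0,0) {$1$};
\node (D) at (4,0) {$\T_{\beta+1}$,};
\path[-stealth,font=\scriptsize]
(A) edge node[above] {$\coprod f$} (B)
(A) edge node[left] {} (C)
(B) edge node[right] {$q_\beta$} (D)
(C) edge node[above] {} (D);
\end{tikzpicture}
\end{equation*}
and at a limit ordinal $\lambda'$ set $\T_{\lambda'} := \mathrm{colim}_{\beta < \lambda'} \T_\beta$ with transition maps the induced morphisms into the colimit.

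Next I would verify the two properties step by step. Each successor map $q_\beta \colon \T_\beta \to \T_{\beta+1}$ is the pushout of $\coprod \A \to 1$, which is a regular epimorphism (the trivial crossed module is the cokernel of the identity); since pushouts preserve regular epimorphisms in the semi-abelian category $\sf XMod$, $q_\beta$ is a regular epimorphism. The morphism $\A\to 1$ is tautologically a $\P_\A$-equivalence, its coproduct is again a $\P_\A$-equivalence by \cref{lemma_colimitandequivalences}(4), and its pushout is a $\P_\A$-equivalence by \cref{lemma_colimitandequivalences}(1), so $q_\beta$ is a $\P_\A$-equivalence too. At a limit stage the transition maps $\T_\beta \to \T_{\lambda'}$ are filtered colimits of regular epimorphisms (hence regular epimorphisms, using that filtered colimits are exact in $\sf XMod$) and are $\P_\A$-equivalences by \cref{lemma_colimitandequivalences}(3). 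Closure of both classes under composition then yields the statement for arbitrary composites in the tower.

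To choose $\lambda$ I would invoke the local presentability of $\sf XMod$: since $\sf XMod$ is a variety of many-sorted algebras, every crossed module is $\kappa$-presentable for some regular cardinal $\kappa$. Take $\lambda$ to be a regular cardinal larger than the presentability rank of $\A$. Then any morphism $\A \to \T_\lambda$ factors through some $\T_\beta$ with $\beta < \lambda$, and by construction its composite with $q_\beta$ is trivial, so it vanishes in $\T_\lambda$. Consequently $\T_\lambda$ is $\A$-null, i.e.\ $\P_\A$-local, and the $\P_\A$-equivalence $\T \to \T_\lambda$ must coincide (up to canonical isomorphism) with $\ell^\T = p^\T \colon \T \to \P_\A\T$.

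The main obstacle I anticipate is the smallness step: one must argue that an arbitrary map $\A \to \T_\lambda$ factors through a stage $\T_\beta$ with $\beta<\lambda$. This rests on $\A$ being a presentable object of $\sf XMod$, a fact that is standard for varieties but which is the one non-formal ingredient; the rest of the argument is a mechanical check that the two properties of being a regular epimorphism and a $\P_\A$-equivalence propagate through successor and limit stages, which is handled uniformly by \cref{lemma_colimitandequivalences} together with semi-abelian exactness.
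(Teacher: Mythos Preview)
Your proposal is correct and follows essentially the same approach as the paper: the paper (referring to \cite[Proposition~2.8]{MSS}) constructs $\T_{\beta+1}$ as the pushout of $1 \leftarrow \coprod_{\mathrm{Hom}(\A,\T_\beta)} \A \to \T_\beta$, deduces that each successor map is a regular epimorphism and a $\P_\A$-equivalence exactly as you do, and handles limit ordinals via filtered colimits. The only cosmetic difference is in the smallness argument---the paper chooses $\lambda$ as the first infinite ordinal exceeding the number of generators of $A_1$ and $A_2$, whereas you invoke the presentability rank of $\A$ in the locally presentable category $\sf XMod$; these amount to the same thing.
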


This inductive construction has been carefully described in \cite[Proposition~2.8]{MSS}. The reason why each step is a $\sf P_A$-equivalence and a regular epimorphism is that $\T_{\beta +1}$ is constructed from $\T_\beta$ by taking the cokernel of all morphisms $\A \to \T_\beta$. We recall the details and use them in \cref{sec:nullification and admissibility}.
There is a larger class of localization functors we investigate in this sequel to \cite{MSS}. They share with $\sf P_A$ the property that the localization morphism is a regular epimorphism.
\begin{definition}\label{defregepi}
A localization functor $\L$ is a \emph{regular-epi localization} if for any crossed module $\T$ the coaugmentation $\ell^\T \colon \T \to \L\T$ is a regular epimorphism. 
\end{definition} 
\begin{remark}
\label{rem:eprireg}
In the category of crossed modules, a morphism $\alpha =(\alpha_1,\alpha_2)$ is a regular epimorphism (a coequalizer of a pair of parallel arrows) if and only if both $\alpha_1$ and $\alpha_2$ are surjective group homomorphisms \cite[Proposition~2.2]{LLR}. A surjective homomorphism of crossed modules is an epimorphism but there exist epimorphisms that are not surjective. In a pointed protomodular category such as $\sf XMod$, regular epimorphisms and normal epimorphisms (the cokernel of some morphism) coincide.
\end{remark}

We present now some interesting examples of localization functors that will illustrate our results in the rest of the article, see also the end of \cite[Section~2]{MSS}.

\begin{example}\label{example_PZ}
The nullification functor $\P_{\sf X\mathbb{Z}}$ with respect to
the crossed module $\sf X\mathbb{Z}$ is given by: 
\[ 
\P_{\sf X\mathbb{Z}} \left( 
\begin{tikzpicture}[descr/.style={fill=white},yscale=0.7,baseline=(O.base)] 
\node (O) at (-5,-1) {$ $};
\node (F) at (-4.5,0) {$N_1$};
\node (F') at (-4.5,-2) {$N_2$};
\path[-stealth,font=\scriptsize]
(F.south) edge node[left] {$ \partial $} (F'.north);
\end{tikzpicture} 
 \right) = 
\begin{tikzpicture}[descr/.style={fill=white},yscale=0.7,baseline=(O.base)] 
\node (O) at (-1,-1) {$ $};
\node (C) at (-2.5,0) {$ N_1 / [N_2, N_1]$};
\node (C') at (-2.5,-2) {$1$};
\path[-stealth,font=\scriptsize]
(C.south) edge node[left] {$  $} (C'.north);
\end{tikzpicture} 
\]
\end{example}

\begin{example}\label{Ab}
The abelianization functor $\sf Ab \colon \sf  XMod \to XMod$ is already described in \cite{Norriethesis}.
It is defined by:
\[ 
{\sf Ab} \left( 
\begin{tikzpicture}[descr/.style={fill=white},yscale=0.7,baseline=(O.base)] 
\node (O) at (-5,-1) {$ $};
\node (F) at (-4.5,0) {$N_1$};
\node (F') at (-4.5,-2) {$N_2$};
\path[-stealth,font=\scriptsize]
(F.south) edge node[left] {$ \partial $} (F'.north);
\end{tikzpicture} 
 \right) = 
\begin{tikzpicture}[descr/.style={fill=white},yscale=0.7,baseline=(O.base)] 
\node (O) at (-1,-1) {$ $};
\node (C) at (-2.5,0) {$N_1/ [N_2,N_1] $};
\node (C') at (-2.5,-2) {$N_2/ [N_2,N_2]$};
\path[-stealth,font=\scriptsize]
(C.south) edge node[left] {$  \tilde{\partial} $} (C'.north);
\end{tikzpicture} 
  \]
  \end{example}

\begin{example}\label{I}
Our third and last example of localization functor of crossed modules is ${ \sf I} \colon \sf XMod \to XMod$, see \cite[Example~2.15]{MSS}:
 \[ 
{ \sf I} \left( 
\begin{tikzpicture}[descr/.style={fill=white},yscale=0.7,baseline=(O.base)] 
\node (O) at (-5,-1) {$ $};
\node (F) at (-4.5,0) {$N_1$};
\node (F') at (-4.5,-2) {$N_2$};
\path[-stealth,font=\scriptsize]
(F.south) edge node[left] {$ \partial^\N $} (F'.north);
\end{tikzpicture} 
 \right) = 
\begin{tikzpicture}[descr/.style={fill=white},yscale=0.7,baseline=(O.base)] 
\node (O) at (-1,-1) {$ $};
\node (C) at (-2.5,0) {$N_2$};
\node (C') at (-2.5,-2) {$N_2$};
\path[-stealth,font=\scriptsize]
(C.south) edge node[left] {$  Id_{N_2}$} (C'.north);
\end{tikzpicture} 
  \]
This functor is induced by the adjunction between the truncation functor $Tr \colon {\sf XMod \to Grp}$, defined by $Tr(T_1,T_2,\partial^\T) = T_2$, see \cref{lemma_lefttruncation}, and its right adjoint ${\sf R \colon Grp \to XMod }$ that sends a group $T$ to $(T,T,Id_T)$.
 \end{example}

\begin{remark}
The functor considered in \cref{example_PZ} is a regular-epi localization, since all nullification functors are so. However regular-epi localizations are not nullification functors in general as illustrated by the functor $\sf Ab$ in \cref{Ab}. Indeed, if $\sf Ab$ were a nullification $\P_\A$, then $\A = (A_1, A_2, \partial^\A)$ would be a perfect crossed module, i.e. one such that $\sf Ab (\A) = (1, 1, Id)$. 
In particular, the group $A_2$ would be a perfect group. 
But then $\P_\A (\X S_3) = \X S_3$ since there are no non-trivial homomorphisms from a perfect group to the symmetric group $S_3$. But we know that ${\sf Ab}(\X S_3) = \X C_2$, where $C_2$ is the cyclic group of order two, so abelianization is not a nullification.

We finally note that a localization functor $\L_f$ is a  regular-epi localization functor if $f$ itself is a regular epimorphism, an analogous observation appears in \cite{CasacubertaAnderson} for groups.


\end{remark}



To conclude these preliminaries, let us recall the notion of fiberwise localization. We introduced this for crossed modules in \cite[Definition~3.1]{MSS}, but this is not new, for spaces a good reference is \cite[Section~I.F]{MR1392221}.

\begin{definition}
Let ${ \sf L} \colon {\sf XMod} \to {\sf XMod}$ be a localization functor. An exact sequence 
\[ 
\begin{tikzpicture}\label{exactseq}[descr/.style={fill=white},scale=0.8,baseline=(A.base),
xscale=1.2] 
\node (A) at (0,0) {$\T$};
\node (B) at (2.5,0) {$\Q$};
\node (C) at (-2.5,0) {$\N$};
\node (O1) at (-4.5,0) {$1$};
\node (O2) at (4.5,0) {$1$};
\path[-stealth,font=\scriptsize]
(C.east) edge node[above] {$\kappa$} (A.west)
;
\path[-stealth,font=\scriptsize]
(O1) edge node[above] {$ $} (C)
(B) edge node[above] {$ $} (O2)
(A) edge node[above] {$\alpha$} (B);
\end{tikzpicture} 
\]
admits a fiberwise localization if there exists a commutative diagram of horizontal exact sequences
\begin{center}
\begin{tikzpicture}[descr/.style={fill=white},baseline=(A.base),scale=0.8] 
\node (A) at (0,0) {$\T$};
\node (B) at (2.5,0) {$\Q$};
\node (C) at (-2.5,0) {$\N$};
\node (A') at (0,-2) {$\E$};
\node (B') at (2.5,-2) {$\Q$};
\node (C') at (-2.5,-2) {$\LN$};
\node (O1) at (-4.5,0) {$1$};
\node (O1') at (-4.5,-2) {$1$};
\node (O2) at (4.5,0) {$1$};
\node (O2') at (4.5,-2) {$1$};
\path[-stealth,font=\scriptsize]
(C.east) edge node[above] {$\kappa$} (A.west)
;
\draw[commutative diagrams/.cd, ,font=\scriptsize]
(B) edge[commutative diagrams/equal] (B');
\path[-stealth,font=\scriptsize]
(C'.east) edge node[above] {$j $} (A'.west)
(O1) edge node[above] {$ $} (C)
(B) edge node[above] {$ $} (O2)
(B') edge node[above] {$ $} (O2')
(O1') edge node[above] {$ $} (C')
 (C.south) edge node[left] {$ \ell^{\N} $}  (C'.north)
(A') edge node[above] {$ p$} (B') ([yshift=2pt]A'.east)
(A) edge node[above] {$\alpha$} (B);
\path[-stealth,font=\scriptsize]
(A.south) edge node[left] {$ g $} (A'.north);
\end{tikzpicture} 
 \end{center} where $g$ is an $\L$-equivalence.
\end{definition}

The following theorem is a fusion of two results from \cite{MSS} namely Theorem 3.4 and Corollary 3.7. From now on, every localization functor that we consider is a regular-epi localization.

\begin{theorem}\label{thmfiberwiselocalization}
Let $\sf L \colon \sf XMod \to XMod$ be a regular-epi localization functor. An exact sequence of crossed modules
\begin{equation}
\begin{tikzpicture}[descr/.style={fill=white},baseline=(A.base),scale=0.8] 
\node (A) at (0,0) {$\T$};
\node (B) at (2.5,0) {$\Q$};
\node (C) at (-2.5,0) {$\N$};
\node (O1) at (-4.5,0) {$1$};
\node (O2) at (4.5,0) {$1$};
\path[-stealth,font=\scriptsize]
(C.east) edge node[above] {$\kappa$} (A.west)
;
\path[-stealth,font=\scriptsize]
(O1) edge node[above] {$ $} (C)
(B) edge node[above] {$ $} (O2)
(A) edge node[above] {$\alpha$} (B);
\end{tikzpicture} 
 \end{equation}
admits a fiberwise localization if and only if we have the following inclusion \begin{equation}\label{Conditionnormalite}
[\kappa_2(ker(\ell_2^\N)),T_1] \subseteq \kappa_1(ker(\ell_1^\N))
\end{equation}
\end{theorem}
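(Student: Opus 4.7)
The plan is to treat the two directions separately, since the "only if" direction is essentially a normality computation while the "if" direction requires constructing $\E$ explicitly as a quotient.

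For the "only if" direction, I would assume we are given a fiberwise localization with resulting extension $1 \to \LN \xrightarrow{j} \E \xrightarrow{p} \Q \to 1$ and vertical map $g \colon \T \to \E$. The key observation is that the commutativity of the diagram forces $\kappa(\ker \ell^\N) \subseteq \ker g$, and a diagram chase (or application of \cref{usefulproposemiab}(1) to the top half of the diagram, which exhibits it as a pullback because the induced map $\ell^\N \colon \N \to \LN$ is a regular epi with the right kernel) identifies $\ker g$ with $\kappa(\ker \ell^\N)$. Because $\ker g$ is a normal subcrossed module of $\T$ in the semi-abelian sense, the third normality condition from \cref{defnormalcrossed} is exactly the inclusion $[\kappa_2(\ker \ell_2^\N), T_1] \subseteq \kappa_1(\ker \ell_1^\N)$.

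For the "if" direction, the strategy is to define $\E := \T / \kappa(\ker \ell^\N)$ and verify that it fits into a fiberwise localization diagram. This presupposes that $\kappa(\ker \ell^\N)$ is a normal subcrossed module of $\T$, so I would first check the three normality conditions of \cref{defnormalcrossed}. Condition~(3) is precisely the hypothesis. For conditions~(1) and (2), the argument is by functoriality: for each $t \in T_2$ the pair of maps $n_1 \mapsto {}^{t}n_1$ and $n_2 \mapsto t n_2 t^{-1}$ defines an automorphism $\phi_t$ of the crossed module $\N$ (the crossed-module axioms \eqref{peiffer} for $\T$ translate into the required compatibilities on $\N$), and naturality of $\ell^\N$ forces $\phi_t$ to restrict to an automorphism of $\ker \ell^\N$, yielding stability of both $K_1$ and $K_2$ under the $T_2$-action.

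Once normality is in place, I would take the cokernel $g \colon \T \twoheadrightarrow \E = \T/\kappa(\ker\ell^\N)$ and check that the induced sequence $1 \to \LN \xrightarrow{j} \E \xrightarrow{p} \Q \to 1$ is exact. For exactness on the left, the composite $\N \hookrightarrow \T \twoheadrightarrow \E$ factors through $\LN = \N/\ker\ell^\N$ by construction; applying \cref{usefulproposemiab}(2) and standard Noether-type isomorphisms then identifies $\ker p$ with $\LN$. The map $\T \to \E$ is an $\L$-equivalence because it is the pushout of the $\L$-equivalence $\ell^\N$ along $\kappa$, which is an $\L$-equivalence by \cref{lemma_colimitandequivalences}(1).

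The main obstacle, and the step most deserving of care, is the identification $\ker(p) \cong \LN$ in the "if" direction: one must ensure that quotienting $\T$ by $\kappa(\ker\ell^\N)$ does not collapse any further part of $\N$, and this rests on the interaction between the regular-epi hypothesis on $\ell^\N$, the normality just verified, and the semi-abelian machinery recorded in \cref{usefulproposemiab}.
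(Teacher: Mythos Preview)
The paper does not actually prove \cref{thmfiberwiselocalization}; it is quoted from \cite{MSS} (as the fusion of Theorem~3.4 and Corollary~3.7 there), so there is no in-paper argument to compare against. Your outline is nonetheless a reasonable reconstruction of what the proof in \cite{MSS} must do, and the overall architecture---identify $\ker g$ with $\kappa(\ker\ell^{\N})$ for the forward direction, and build $\E=\T/\kappa(\ker\ell^{\N})$ for the converse---is correct.

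Two small points deserve tightening. First, your appeal to \cref{usefulproposemiab}(1) in the ``only if'' direction is misapplied: that statement goes from an isomorphism on kernels to a pullback, not the other way, and here the left vertical map $\ell^{\N}$ is not an isomorphism. The identification $\ker g=\kappa(\ker\ell^{\N})$ follows instead from the elementary diagram chase you also mention (using that $j$ is monic and the right-hand vertical is the identity), or equivalently from the $3\times 3$ lemma. Second, in the ``if'' direction you should say explicitly why the quotient $\T/\kappa(\ker\ell^{\N})$ \emph{is} the pushout of $\ell^{\N}$ along $\kappa$: this holds precisely because you have just verified that $\kappa(\ker\ell^{\N})$ is normal in $\T$, so the normal closure needed to form the pushout adds nothing. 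With those two clarifications your sketch is complete; the automorphism argument for conditions (1) and (2) of \cref{defnormalcrossed} via naturality of $\ell$ is a clean way to handle the step that \cite{MSS} isolates as the key obstruction.
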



\section{Fiberwise localization and flatness}

 In this section, we investigate the fiberwise localization of $\L$-flat exact sequences and their pullbacks in the context of regular-epi localization functors of crossed modules $\L \colon \sf XMod \to XMod$ (even if this notion is not defined only for regular-epi functor as we will see in \cref{condflatnotregularepi}). This section will be essential to study the link between conditionally flatness and admissibility in \cref{section CF and Ad}. 
First, let us recall the definitions of $\L$-flat and conditionally flatness.  

\begin{definition}
\label{def:flat}
Let $\L$ be a localization functor, a short exact sequence\[ \begin{tikzpicture}[descr/.style={fill=white},scale=0.8, baseline=(A.base),
xscale=1.2] 
\node (A) at (0,0) {$\T$};
\node (B) at (2.5,0) {$\Q$};
\node (C) at (-2.5,0) {$\N$};
\node (O1) at (-4.5,0) {$1$};
\node (O2) at (4.5,0) {$1$};
\path[-stealth,font=\scriptsize]
(C.east) edge node[above] {$\kappa$} (A.west)
;
\path[-stealth,font=\scriptsize]
(O1) edge node[above] {$ $} (C)
(B) edge node[above] {$ $} (O2)
(A) edge node[above] {$\alpha$} (B);
\end{tikzpicture}\] is called \emph{$\L$-flat} if the sequence \begin{tikzpicture}[descr/.style={fill=white},xscale=0.8,baseline=(A.base),
xscale=1.2] 
\node (A) at (0,0) {$\LT$};
\node (B) at (2.5,0) {$\LQ$};
\node (C) at (-2.5,0) {$\LN$};
\path[-stealth,font=\scriptsize]
(C.east) edge node[above] {$\L(\kappa)$} (A.west)
;
\path[-stealth,font=\scriptsize]
(A) edge node[above] {$\L(\alpha)$} (B);
\end{tikzpicture} is a short exact sequence.
\end{definition}

\begin{remark}
    We recall that limits are computed componentwise in the category of crossed modules. In the case of pullbacks in $\sf XMod$ they are built as follows \cite{LG}. Let $\alpha \colon \T \to \Q$ and $g \colon \Q' \to \Q$ be two morphisms of crossed modules. Then the pullback of $\alpha$ along $g$ is given by the following square
\[
\begin{tikzpicture}[descr/.style={fill=white},yscale=0.8]
\node (A) at (0,0) {$\T$};
\node (B) at (0,2) {$\T'$};
\node (C) at (2,2) {$\Q'$};
\node (D) at (2,0) {$\Q$};
  \path[-stealth,font=\scriptsize]
 (B.south) edge node[left] {$\pi_\T$} (A.north) 
 (C.south) edge node[right] {$g$} (D.north) 
  (B.east) edge node[above] {$\pi_{\Q'}$} (C.west) 
 (A.east) edge node[below] {$\alpha$} (D.west);
\end{tikzpicture}
\]
The object part $\T'$ of the pullback is built component-wise as in the case of groups 
\[ 
(T_1\times_{Q_1}Q'_1, T_2\times_{Q_2}Q'_2, \partial'),
\]
where $\partial'$ and the action are induced by the universal property of the pullbacks in $\sf Grp$. The projections are the natural ones, given also component-wise.
\end{remark}

 Following the terminology introduced in \cite{FS} for groups and spaces, we define the notion of conditional flatness for localization functors in crossed modules.


\begin{definition}\label{defcondflat}
Let $\L$ be a localization functor. We say that this functor is \emph{conditionally flat} if the pullback of any $\L$-flat exact sequence is $\L$-flat.
\end{definition}

 In \cref{section CF and Ad} we provide a characterization of conditional flatness. To achieve this goal we will use a similar strategy to the one applied to groups and topological spaces in \cite{FS}. The authors exploit heavily the existence of fiberwise localization in the categories of groups and spaces. However, in our article \cite{MSS}, we observed that fiberwise localization does not always exist for a given localization functor and a given exact sequence in $\sf XMod$.
Fortunately, when we work with $\L$-flat exact sequences we can show that it is always possible to construct a fiberwise localization.


 %

\begin{lemma}
\label{Lflatfiberwise}
 Let $\L$ be a regular-epi localization. Then any $\L$-flat exact sequence of crossed modules admits a fiberwise localization.
\end{lemma}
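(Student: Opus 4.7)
The plan is to use the characterization of exact sequences admitting fiberwise localization given in \cref{thmfiberwiselocalization}, so everything reduces to verifying the normality-type inclusion
\[
[\kappa_2(\ker \ell_2^\N),T_1]\subseteq \kappa_1(\ker \ell_1^\N).
\]
I will prove the inclusion by first placing the left-hand side inside $\kappa_1(N_1) \cap \ker \ell_1^\T$ using two normality arguments, and then use $\L$-flatness to pull elements of that intersection back into $\kappa_1(\ker \ell_1^\N)$.

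First I would observe that $\ker \ell^\T$ is a normal subcrossed module of $\T$ (as the kernel of a morphism of crossed modules), and by naturality of $\ell$ the inclusion $\kappa_i(\ker \ell_i^\N)\subseteq \ker \ell_i^\T$ holds for $i=1,2$. Applying condition (3) of \cref{defnormalcrossed} to $\ker \ell^\T\triangleleft \T$ yields
\[
[\kappa_2(\ker \ell_2^\N),T_1]\subseteq [\ker \ell_2^\T,T_1]\subseteq \ker \ell_1^\T.
\]
Independently, since $\N$ is itself normal in $\T$ (being the kernel of $\alpha$), the same condition gives $[\kappa_2(\ker \ell_2^\N),T_1]\subseteq[\kappa_2(N_2),T_1]\subseteq \kappa_1(N_1)$. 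Hence every generator of the left-hand commutator subgroup is of the form $\kappa_1(y)$ for some $y\in N_1$ (unique because $\kappa_1$ is injective, $\kappa$ being a kernel) with $\kappa_1(y)\in\ker \ell_1^\T$.

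It remains to show that such a $y$ lies in $\ker\ell_1^\N$, and this is exactly where $\L$-flatness enters. By definition of $\L$-flatness the sequence $1\to \LN\xrightarrow{\L\kappa}\LT\xrightarrow{\L\alpha}\LQ\to 1$ is exact, so $\L\kappa$ is a monomorphism in $\sf XMod$. Since monomorphisms are computed componentwise (limits in $\sf XMod$ are levelwise), $(\L\kappa)_1$ is an injective group homomorphism. Naturality of $\ell$ gives
\[
(\L\kappa)_1(\ell_1^\N(y)) \;=\; \ell_1^\T(\kappa_1(y)) \;=\; 1,
\]
so injectivity of $(\L\kappa)_1$ forces $\ell_1^\N(y)=1$, that is $y\in \ker\ell_1^\N$. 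Therefore $\kappa_1(y)\in\kappa_1(\ker\ell_1^\N)$, which is what we needed. The inclusion required by \cref{thmfiberwiselocalization} holds, so the exact sequence admits a fiberwise localization.

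There is no genuinely hard step here: the work is in combining three separate inputs — normality of $\ker \ell^\T$, normality of $\N$, and componentwise injectivity of $\L\kappa$ coming from $\L$-flatness — rather than in any single delicate argument. The only point that deserves care is to not confuse the two normality conditions used (one to land in $\ker \ell_1^\T$, one to land in $\kappa_1(N_1)$); it is their conjunction, together with injectivity of $\kappa_1$, that allows the final descent via $\L$-flatness.
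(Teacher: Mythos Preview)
Your proof is correct and follows essentially the same approach as the paper. The paper packages the argument more categorically: it observes that since $\L\kappa$ is a monomorphism the square comparing $\ker(\ell^\N)$ and $\ker(\ell^\T)$ is a pullback, so $\kappa(\ker(\ell^\N))=\N\cap\ker(\ell^\T)$ is an intersection of two normal subcrossed modules of $\T$ and hence itself normal, which in particular gives the inclusion required by \cref{thmfiberwiselocalization}; you unpack the same ingredients element-wise, first landing the commutator in $\kappa_1(N_1)\cap\ker\ell_1^\T$ and then using injectivity of $(\L\kappa)_1$ to descend to $\kappa_1(\ker\ell_1^\N)$.
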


\begin{proof}
Let \begin{tikzpicture}[descr/.style={fill=white},scale= 0.8, baseline=(A.base)] 
\node (A) at (0,0) {$\T$};
\node (B) at (2.5,0) {$\Q$};
\node (C) at (-2.5,0) {$\N$};
\node (O1) at (-5,0) {$1$};
\node (O2) at (5,0) {$1$};
\path[-stealth,font=\scriptsize]
(C.east) edge node[above] {$\kappa$} (A.west)
(O1) edge node[above] {$ $} (C)
(B) edge node[above] {$ $} (O2)
(A) edge node[above] {$\alpha$} (B);
\end{tikzpicture}
be an $\L$-flat exact sequence of crossed modules. The $\L$-flatness of the sequence implies in particular that ${\sf L} \kappa$ is a monomorphism. Consider the following diagram of exact sequences:
\[ \begin{tikzpicture}[descr/.style={fill=white},baseline=(A.base),yscale=0.8]
\node (O1) at (-2,2) {$ 1$};
\node (O2) at (-2,0) {$ 1$};
\node (A) at (0,0) {$\ker(\ell^\T)$};
\node (X) at (1,1) {$(1)$};
\node (B) at (0,2) {$\ker(\ell^\N)$};
\node (C) at (2,2) {$\N$};
\node (D) at (2,0) {$\T$};
\node (E) at (4,2) {$\LN$};
\node (F) at (4,0) {$\LT$};
  \path[>-stealth,font=\scriptsize]
  (A.east) edge node[below] {$ $} (D.west) 
 (C.south) edge node[right] {$ \kappa $} (D.north) 
 (B.south) edge node[left] {$ $} (A.north) 
 (B.east)  edge node[above] {$ $} (C.west)
 (E.south) edge node[right] {$ \L \kappa$} (F.north);
   \path[-stealth,font=\scriptsize]
   (O1) edge node[below] {$ $} (B)
   (O2) edge node[below] {$ $} (A)
    (C.east)  edge node[above] {$ \ell^\N$} (E.west)
 (D.east) edge node[below] {$\ell^\T$} (F.west);
\end{tikzpicture}
\]
We conclude from \cite[Lemma~4.2.4.(1)]{BB} that (1) is a pullback since ${\sf L} \kappa$ is a monomorphism. Then we have that $\kappa(\ker(\ell^\N))$ is a normal subcrossed 
module of $\T$ as it can be seen as the intersection of the normal subcrossed modules $\N$ and $\ker(\ell^\T)$ of $\T$. Therefore, we can apply \cref{thmfiberwiselocalization} 
 \end{proof}

 To understand conditional flatness we must study 
the pullback of an $\L$-flat exact sequence. It will thus be very handy in \cref{section CF and Ad} to know that any such pullback admits a fiberwise localization.

\begin{lemma}
\label{lem:fiberwiseforpb}
 Let $\L$ be a regular-epi localization.  Let\begin{equation}\label{eseq L-flat}
\begin{tikzpicture}[descr/.style={fill=white},scale= 0.8, baseline=(A.base)] 
\node (A) at (0,0) {$\T$};
\node (B) at (2.5,0) {$\Q$};
\node (C) at (-2.5,0) {$\N$};
\node (O1) at (-5,0) {$1$};
\node (O2) at (5,0) {$1$};
\path[-stealth,font=\scriptsize]
(C.east) edge node[above] {$\kappa$} (A.west)
(O1) edge node[above] {$ $} (C)
(B) edge node[above] {$ $} (O2)
(A) edge node[above] {$\alpha$} (B);
\end{tikzpicture} 
 \end{equation} be an $\L$-flat exact sequence of crossed modules and $g \colon \Q' \to \Q$ a morphism of crossed modules. Then, we can construct the fiberwise localization of the pullback of \eqref{eseq L-flat} along $g$
 \[ \begin{tikzpicture}[descr/.style={fill=white},scale = 0.8,baseline=(A.base)]
\node (A) at (0,0) {$\N$};
\node (B) at (0,2) {$\N$};
\node (C) at (2.5,2) {$\T'$};
\node (D) at (2.5,0) {$\T$};
\node (E) at (5,2) {$\Q'$};
\node (F) at (5,0) {$\Q$};
\node (O1) at (-2.5,2) {$1$};
\node (O1') at (-2.5,0) {$1$};
\node (O2) at (7.5,2) {$1$};
\node (O2') at (7.5,0) {$1$};
  \path[-stealth,font=\scriptsize]
(O1) edge node[above] {$ $} (B)
(E) edge node[above] {$ $} (O2)
(F) edge node[above] {$ $} (O2')
(O1') edge node[above] {$ $} (A)
  (A.east) edge node[below] {$\kappa$} (D.west) 
 (C.south) edge node[right] {$\pi_{\T}$} (D.north) 
 (B.east)  edge node[above] {$\kappa'$} (C.west)
 (E.south) edge node[right] {$g$} (F.north)
 (C.east)  edge node[above] {$\pi_{\Q'}$} (E.west)
 (D.east) edge node[below] {$\alpha$} (F.west);
  \draw[commutative diagrams/.cd, ,font=\scriptsize]
(B) edge[commutative diagrams/equal] (A) ;
\end{tikzpicture}\]
 \end{lemma}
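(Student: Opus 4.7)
The plan is to apply Theorem~\ref{thmfiberwiselocalization} to the pullback sequence along $g$. First, I would observe that kernels are preserved under pullback (either via Proposition~\ref{usefulproposemiab}(1) or directly from the componentwise construction), so the pullback of~\eqref{eseq L-flat} along $g$ is an exact sequence
\[
1 \longrightarrow \N \xrightarrow{\kappa'} \T' \xrightarrow{\pi_{\Q'}} \Q' \longrightarrow 1,
\]
which is exactly the top row of the displayed diagram. Thus the diagram is well-formed and exact, and the task reduces to verifying that this pullback sequence satisfies condition~\eqref{Conditionnormalite} of Theorem~\ref{thmfiberwiselocalization}, i.e.
\[
[\kappa'_2(\ker(\ell_2^\N)),\, T'_1] \;\subseteq\; \kappa'_1(\ker(\ell_1^\N)).
\]

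Next, I would invoke Lemma~\ref{Lflatfiberwise}: because \eqref{eseq L-flat} is $\L$-flat, it admits a fiberwise localization, so Theorem~\ref{thmfiberwiselocalization} supplies the analogous inclusion in $\T$, namely
\[
[\kappa_2(\ker(\ell_2^\N)),\, T_1] \;\subseteq\; \kappa_1(\ker(\ell_1^\N)).
\]

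Finally, to transfer this condition from $\T$ to $\T'$, I would exploit the explicit componentwise description of pullbacks in $\sf XMod$: $T'_i = T_i \times_{Q_i} Q'_i$ for $i = 1, 2$, with action and connecting morphism induced componentwise. Under this description $\kappa'(n) = (\kappa(n),1)$ for every $n \in \N$, since the trivial element of $\Q'$ maps via $g$ to $\alpha\kappa(n) = 1$. A short componentwise calculation then shows that a typical generator $\;^{\kappa'_2(n_2)}(t_1, q'_1)\cdot (t_1, q'_1)^{-1}$ of $[\kappa'_2(\ker(\ell_2^\N)), T'_1]$ equals $(\;^{\kappa_2(n_2)} t_1 \cdot t_1^{-1},\, 1)$ in $T_1 \times_{Q_1} Q'_1$. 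Its first component lies in $\kappa_1(\ker(\ell_1^\N))$ by the inclusion established in $\T$, so the whole pair lies in $\kappa'_1(\ker(\ell_1^\N))$, as required.

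The only real subtlety is the recognition that the $\L$-flatness hypothesis enters precisely to supply the normality condition~\eqref{Conditionnormalite} for the original sequence via Lemma~\ref{Lflatfiberwise}; once this is in hand the verification for the pullback is a straightforward componentwise transfer, needing no machinery beyond Theorem~\ref{thmfiberwiselocalization} and the known componentwise construction of limits in $\sf XMod$.
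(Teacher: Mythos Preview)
Your proposal is correct and follows essentially the same approach as the paper: invoke Lemma~\ref{Lflatfiberwise} to obtain the inclusion $[\kappa_2(\ker(\ell_2^\N)), T_1] \subseteq \kappa_1(\ker(\ell_1^\N))$ in $\T$, then verify condition~\eqref{Conditionnormalite} for the pullback sequence by the same elementwise computation $\;^{(x_2,1)}(t_1,q'_1)(t_1,q'_1)^{-1} = (\;^{x_2}t_1 t_1^{-1}, 1)$. The only difference is cosmetic---you spell out the exactness of the top row and keep the $\kappa,\kappa'$ notation explicit, whereas the paper suppresses these via the identification in Remark~\ref{kappakappa'}.
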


 \begin{remark} \label{kappakappa'}
 In the rest of the article, and in particular in the following proof, we identify $\N$ with the normal subcrossed module $\kappa(\N)$ of $\T$ and with $\kappa'(\N)$, normal subcrossed module of $\T'$. We will therefore omit the us of $\kappa$ and $\kappa'$. For example an element of the group $N_1$ that we want to consider in $\T'_1$ will be denoted $(n_1,1)$ instead of $\kappa'_1(n_1)=(\kappa_1(n_1),1)$.
 \end{remark}

 \begin{proof}[Proof of \cref{lem:fiberwiseforpb}]
 We need to verify that $\ker(\ell^\N)$ is a normal crossed module of $\T'$. 
Since $\N$ is a subcrossed module of $\T'$, we just need to verify \eqref{Conditionnormalite} of \cref{thmfiberwiselocalization}. Let $(t_1,q_1)$ be an element in $T'_1$ and $(x_2,1)$ be an element of $\ker(\ell_2^\N)$, then we have the following equality
\[ \;^{(x_2,1)}(t_1,q_1)(t_1,q_1)^{-1} = (\;^{x_2}t_1t_1^{-1},q_1q_1^{-1})= (\;^{x_2}t_1t_1^{-1},1). \]
Indeed, by \cref{Lflatfiberwise} we know that the original sequence \eqref{eseq L-flat} admits a fiberwise localization which then implies by \cref{thmfiberwiselocalization} that $[ker(\ell_2^\N),T_1] \subset ker(\ell_1^\N)$ i.e for any $x_2 \in ker(\ell_2^\N)$ and $t_1 \in T_1$ we have  $^{x_2}t_1t_1^{-1} \in ker(\ell_1^\N)$. But then, with the notation introduced in \cref{kappakappa'}, this is equivalent to say that the element $(\;^{x_2}t_1t_1^{-1},1)$ belongs to $ker(\ell_1^\N)$.
\end{proof}

This lemma is not trivial since the fiberwise localization of an exact sequence of crossed modules does not always exist as we have proved in \cite[Theorem~4.5]{MSS}. 
If we want the strategy for groups and spaces to be also viable in the study of conditional flatness for crossed modules, we need a final ingredient, namely a commutation rule for the fiberwise localization and the pullback operations.


\begin{prop}
\label{prop:pullbackfiber=fiberpullback}
 Let us consider an $\L$-flat exact sequence where $\L$ is a regular-epi localization functor. Then, the pullback of its fiberwise localization is the fiberwise localization of its pullback. 
\end{prop}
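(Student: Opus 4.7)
The plan is to identify both sides of the asserted equality as the same quotient of $\T'$ by the normal subcrossed module $\ker(\ell^{\N})$. The starting point is Lemma~\ref{Lflatfiberwise}, which produces a fiberwise localization
\[
1 \to \LN \to \E \to \Q \to 1
\]
together with an $\L$-equivalence $q\colon \T \to \E$ sitting in a morphism of short exact sequences over $\mathrm{id}_{\Q}$ whose left-hand column is $\ell^{\N}$. Since $p\circ q=\alpha$, one has $\ker(q)\subseteq\N$; and since $q$ restricted to $\N$ factors as $\ell^{\N}$ followed by the monomorphism $\LN\hookrightarrow\E$, one gets $\ker(q)=\ker(\ell^{\N})$. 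Proposition~\ref{usefulproposemiab}(2) then forces $q$ to be a regular epi, so $\E$ is canonically the quotient $\T/\ker(\ell^{\N})$.

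I then apply $(-)\times_{\Q}\Q'$ to the whole ladder. The top row pulls back to $1\to\N\to\T'\to\Q'\to1$ and the bottom row to $1\to\LN\to\E'\to\Q'\to1$, where $\E'$ is by definition the pullback of the fiberwise localization; the universal property produces an induced morphism $q'\colon \T'\to\E'$. The pullback-pasting lemma implies that the square with vertices $\T',\E',\T,\E$, horizontal arrows $q',q$, and vertical arrows the two projections is itself a pullback. Regular epimorphisms are pullback-stable in the semi-abelian category $\sf XMod$, so $q'$ is again a regular epi, and pullbacks preserve kernels (Proposition~\ref{usefulproposemiab}), so $\ker(q')=\ker(q)=\ker(\ell^{\N})$, now viewed as a subcrossed module of $\N\subset\T'$ via the convention of Remark~\ref{kappakappa'}. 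Consequently $\E'$ is canonically the quotient $\T'/\ker(\ell^{\N})$.

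On the other side, Lemma~\ref{lem:fiberwiseforpb} provides a fiberwise localization of the pulled-back sequence, and repeating the argument of the first paragraph inside $\T'$ identifies it with the quotient $\T'/\ker(\ell^{\N})$ as well. Hence the two candidate extensions of $\Q'$ by $\LN$ are canonically isomorphic, compatibly with $\ell^{\N}$ on kernels and with $\mathrm{id}_{\Q'}$ on quotients, which is exactly what the proposition asserts. The one technical point to keep an eye on is that $\ker(\ell^{\N})$ really is a normal subcrossed module of $\T'$, so that speaking of the relevant quotient makes sense on both sides; but this normality is precisely the content of Lemma~\ref{lem:fiberwiseforpb}, so there is no genuine obstacle beyond the pullback-pasting lemma and the characterization of a regular epi, in a semi-abelian category, by its kernel.
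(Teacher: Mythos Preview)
Your argument is correct and is essentially the same as the paper's: both identify the pullback of the fiberwise localization and the fiberwise localization of the pullback with the quotient $\T'/\ker(\ell^{\N})$, using that the comparison map out of $\T'$ is a regular epimorphism with kernel $\ker(\ell^{\N})$. The only organizational difference is that the paper first quotients both rows and then constructs an explicit comparison $\delta\colon \T'/\ker(\ell^{\N}) \to \T/\ker(\ell^{\N})$, invoking \cref{usefulproposemiab}(1) to see the resulting square is a pullback, whereas you start from the pullback $\E'$ and use the pasting lemma together with pullback-stability of regular epimorphisms to recognize it as $\T'/\ker(\ell^{\N})$; this is a cosmetic rearrangement of the same computation.
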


\begin{proof}
Let  \[ \begin{tikzpicture}[descr/.style={fill=white},baseline=(A.base), yscale=0.8]
\node (A) at (0,0) {$\N$};
\node (B) at (0,2) {$\N$};
\node (C) at (2,2) {$\T'$};
\node (D) at (2,0) {$\T$};
\node (E) at (4,2) {$\Q'$};
\node (F) at (4,0) {$\Q$};
  \node (O1) at (-2,2) {$1$};
\node (O1') at (-2,0) {$1$};
\node (O2) at (6,2) {$1$};
\node (O2') at (6,0) {$1$};
  \path[-stealth,font=\scriptsize]
  (O1) edge node[above] {$ $} (B)
(E) edge node[above] {$ $} (O2)
(F) edge node[above] {$ $} (O2')
(O1') edge node[above] {$ $} (A)
  (A.east) edge node[below] {$\kappa$} (D.west) 
 (C.south) edge node[right] {$\pi_{\T}$} (D.north) 
 (B.east)  edge node[above] {$\kappa'$} (C.west)
 (E.south) edge node[right] {$g$} (F.north)
 (C.east)  edge node[above] {$\pi_{\Q'}$} (E.west)
 (D.east) edge node[below] {$\alpha$} (F.west);
  \draw[commutative diagrams/.cd, ,font=\scriptsize]
(B) edge[commutative diagrams/equal] (A) ;
\end{tikzpicture}\] be the pullback of an $\L$-flat exact sequence. Then we construct the fiberwise localizations of the two sequences by quotienting out the kernel of the localization morphism $\ell^\N$ as follows.
\[
\begin{tikzpicture}[yscale=0.8]
	\begin{pgfonlayer}{nodelayer}
		\node [style=green] (0) at (-6.75, 1.25) {$\N$};
		\node [style=green] (0') at (-8.5, 1.25) {$1$};
		\node [style=green] (1) at (-3.75, 1.25) {$\T'$};
		\node [style=green] (2) at (-0.75, 1.25) {$\Q'$};
		\node [style=green] (2') at (1, 1.25) {$1$};
		\node [style=green] (3) at (-6.75, -1.75) {$\N$};
		\node [style=green] (3') at (-8.5, -1.75) {$1$};
		\node [style=green] (4) at (-3.75, -1.75) {$\T$};
		\node [style=green] (5) at (-0.75, -1.75) {$\Q$};
		\node [style=green] (5') at (1, -1.75) {$1$};
		\node [style=green] (6) at (-4.75, 0.25) {$\LN$};
		\node [style=green] (6') at (-6.5, 0.25) {$1$};
		\node [style=green] (7) at (-1.75, 0.25) {$\T' / \ker(\ell^\N)$};
		\node [style=green] (8) at (1.25, 0.25) {$\Q'$};
		\node [style=green] (8') at (3, 0.25) {$1$};
		\node [style=green] (9) at (-4.75, -2.75) {$\LN$};
		\node [style=green] (9') at (-6.5, -2.75) {$1$};
		\node [style=green] (10) at (-1.75, -2.75) {$\T / \ker(\ell^\N)$};
		\node [style=green] (11) at (1.25, -2.75) {$\Q$};
		\node [style=green] (11') at (3, -2.75) {$1$};
		\node [style=none,font=\scriptsize] (12) at (-5.25, 1.5) {$\kappa'$};
		\node [style=none,font=\scriptsize] (13) at (-2, 1.5) {$\pi_{\Q'}$};
		\node [style=none,font=\scriptsize] (16) at (-5.25, -1.5) {$\kappa$};
		\node [style=none,font=\scriptsize] (17) at (-2, -1.5) {$\alpha$};
		\node [style=none,font=\scriptsize] (18) at (-0.5, -0.5) {$g$};
		\node [style=none,font=\scriptsize] (19) at (-3.25, -0.5) {$\pi_{\T}$};
		\node [style=none,font=\scriptsize] (21) at (-3.25, -3) {$j$};
		\node [style=none,font=\scriptsize] (22) at (-3.5, 0.5) {$j'$};
		\node [style=none,font=\scriptsize] (23) at (0, -3) {$p$};
		\node [style=none,font=\scriptsize] (24) at (-0.25, 0.5) {$p'$};
		\node [style=none,font=\scriptsize] (26) at (1.5, -1.25) {$g$};
		\node [style=none,font=\scriptsize] (29) at (-2.75, 1) {$f'$};
		\node [style=none,font=\scriptsize] (29) at (-2.7, -2) {$f$};
		\node [style=none,font=\scriptsize] (30) at (-5.5, 1) {$\ell^{\N}$};
		\node [style=none,font=\scriptsize,font=\scriptsize] (31) at (-5.5, -2) {$\ell^{\N}$};
	\end{pgfonlayer}
		\draw[commutative diagrams/.cd, ,font=\scriptsize]
(0) edge[commutative diagrams/equal] (3);
	\draw[commutative diagrams/.cd, ,font=\scriptsize]
(5) edge[commutative diagrams/equal] (11);
	\draw[commutative diagrams/.cd, ,font=\scriptsize]
(6) edge[commutative diagrams/equal] (9);
	\draw[commutative diagrams/.cd, ,font=\scriptsize]
(2) edge[commutative diagrams/equal] (8);
	\begin{pgfonlayer}{edgelayer}
	    \draw [style=stealth] (0') to (0);
	    \draw [style=stealth] (3') to (3);
	     \draw [style=stealth] (2) to (2');
	     \draw [style=stealth] (5) to (5');
	     \draw [style=stealth] (8) to (8');
	     \draw [style=stealth] (11) to (11');
	    \draw [style=stealth] (6') to (6);
	    \draw [style=stealth] (9') to (9);
		\draw [style=stealth] (0) to (6);
		\draw [style=stealth] (3) to (9);
		\draw [style=stealth] (1) to (7);
		\draw [style=stealth] (4) to (10);
		\draw [style=stealth] (0) to (1);
		\draw [style=stealth] (1) to (2);
		\draw [style=stealth] (7) to (8);
		\draw [style=stealth] (6) to (7);
		\draw [style=stealth] (0) to (3);
		\draw [style=stealth] (6) to (9);
		\draw [style=stealth] (1) to (4);
		\draw [style=stealth] (8) to (11);
		\draw [style=stealth] (3) to (4);
		\draw [style=stealth] (4) to (5);
		\draw [style=stealth] (2) to (5);
		\draw [style=stealth] (9) to (10);
		\draw [style=stealth] (10) to (11);
	\end{pgfonlayer}
\end{tikzpicture}
\]

We complete the diagram by defining a morphism $\delta \colon {\T'}/{\ker(\ell^\N)} \to {\T}/{\ker(\ell^\N)}$ via the universal property of the cokernel since ${f \circ \pi_\T \circ \kappa'|_{\ker(\ell^\N)} = 1}$, where $\kappa'|_{\ker(\ell^\N)} \colon \ker(\ell^\N) \to \T'$ is the inclusion of the kernel of $\ell^\N$. 
\[ \begin{tikzpicture}[descr/.style={fill=white},baseline=(A.base),yscale=0.8]
\node (A) at (0,0) {$\ker(\ell^\N)$};
\node (B) at (0,2) {$\ker(\ell^\N)$};
\node (C) at (2,2) {$\T'$};
\node (D) at (2,0) {$\T$};
\node (E) at (4,2) {$T' / \ker(\ell^\N)$};
\node (F) at (4,0) {$T / \ker(\ell^\N)$};
\node (O1) at (-2,2) {$1$};
\node (O1') at (-2,0) {$1$};
\node (O2) at (6,2) {$1$};
\node (O2') at (6,0) {$1$};
  \path[-stealth,font=\scriptsize]
  (O1) edge node[above] {$ $} (B)
(E) edge node[above] {$ $} (O2)
(F) edge node[above] {$ $} (O2')
(O1') edge node[above] {$ $} (A)
  (A.east) edge node[below] {$\kappa|_{\ker(\ell^\N)}$} (D.west) 
 (C.south) edge node[right] {$\pi_{\T}$} (D.north) 
 (B.east)  edge node[above] {$\kappa'|_{\ker(\ell^\N)}$} (C.west)

 (C.east)  edge node[above] {$f'$} (E.west)
 (D.east) edge node[below] {$f$} (F.west);
   \path[dashed,-stealth,font=\scriptsize]
  (E.south) edge node[right] {$\delta$} (F.north);
   \draw[commutative diagrams/.cd, ,font=\scriptsize]
(B) edge[commutative diagrams/equal] (A) ;
\end{tikzpicture}\]

\[
\begin{tikzpicture}[yscale=0.8]
	\begin{pgfonlayer}{nodelayer}
		\node [style=green] (0) at (-6.75, 1.25) {$\N$};
		\node [style=green] (0') at (-8.5, 1.25) {$1$};
		\node [style=green] (1) at (-3.75, 1.25) {$\T'$};
		\node [style=green] (2) at (-0.75, 1.25) {$\Q'$};
		\node [style=green] (2') at (1, 1.25) {$1$};
		\node [style=green] (3) at (-6.75, -1.75) {$\N$};
		\node [style=green] (3') at (-8.5, -1.75) {$1$};
		\node [style=green] (4) at (-3.75, -1.75) {$\T$};
		\node [style=green] (5) at (-0.75, -1.75) {$\Q$};
		\node [style=green] (5') at (1, -1.75) {$1$};
		\node [style=green] (6) at (-4.75, 0.25) {$\LN$};
		\node [style=green] (6') at (-6.5, 0.25) {$1$};
		\node [style=green] (7) at (-1.75, 0.25) {$\T' / \ker(\ell^\N)$};
		\node [style=green] (8) at (1.25, 0.25) {$\Q'$};
		\node [style=green] (8') at (3, 0.25) {$1$};
		\node [style=green] (9) at (-4.75, -2.75) {$\LN$};
		\node [style=green] (9') at (-6.5, -2.75) {$1$};
		\node [style=green] (10) at (-1.75, -2.75) {$\T /\ker(\ell^\N)$};
		\node [style=green] (11) at (1.25, -2.75) {$\Q$};
		\node [style=green] (11') at (3, -2.75) {$1$};
		\node [style=none,font=\scriptsize] (12) at (-5.25, 1.5) {$\kappa'$};
		\node [style=none,font=\scriptsize] (13) at (-2, 1.5) {$\pi_{\Q'}$};
		\node [style=none,font=\scriptsize] (16) at (-5.25, -1.5) {$\kappa$};
		\node [style=none,font=\scriptsize] (17) at (-2, -1.5) {$\alpha$};
		\node [style=none,font=\scriptsize] (18) at (-0.5, -0.5) {$g$};
		\node [style=none,font=\scriptsize] (19) at (-3.25, -0.5) {$\pi_{\T}$};
		\node [style=none,font=\scriptsize] (21) at (-3.25, -3) {$j$};
		\node [style=none,font=\scriptsize] (22) at (-3.5, 0.5) {$j'$};
		\node [style=none,font=\scriptsize] (23) at (0, -3) {$p$};
		\node [style=none,font=\scriptsize] (24) at (-0.25, 0.5) {$p'$};
		\node [style=none,font=\scriptsize] (26) at (1.5, -1.25) {$g$};
		\node [style=green] (28) at (-1.75, -0.75) {$\delta$};
		\node [style=none,font=\scriptsize] (29) at (-2.75, 1) {$f'$};
		\node [style=none,font=\scriptsize] (29) at (-2.7, -2) {$f$};
		\node [style=none,font=\scriptsize] (30) at (-5.5, 1) {$l^{\N}$};
		\node [style=none,font=\scriptsize] (31) at (-5.5, -2) {$l^{\N}$};
	\end{pgfonlayer}
		\draw[commutative diagrams/.cd, ,font=\scriptsize]
(0) edge[commutative diagrams/equal] (3);
	\draw[commutative diagrams/.cd, ,font=\scriptsize]
(5) edge[commutative diagrams/equal] (11);
	\draw[commutative diagrams/.cd, ,font=\scriptsize]
(6) edge[commutative diagrams/equal] (9);
	\draw[commutative diagrams/.cd, ,font=\scriptsize]
(2) edge[commutative diagrams/equal] (8);
	\begin{pgfonlayer}{edgelayer}
	    \draw [style=stealth] (0') to (0);
	    \draw [style=stealth] (3') to (3);
	     \draw [style=stealth] (2) to (2');
	     \draw [style=stealth] (5) to (5');
	     \draw [style=stealth] (8) to (8');
	     \draw [style=stealth] (11) to (11');
	    \draw [style=stealth] (6') to (6);
	    \draw [style=stealth] (9') to (9);
		\draw [style=stealth] (0) to (6);
		\draw [style=stealth] (3) to (9);
		\draw [style=stealth] (1) to (7);
		\draw [style=stealth] (4) to (10);
		\draw [style=stealth] (0) to (1);
		\draw [style=stealth] (1) to (2);
		\draw [style=stealth] (7) to (8);
		\draw [style=stealth] (6) to (7);
		\draw [style=stealth] (0) to (3);
		\draw [style=stealth] (6) to (9);
		\draw [style=stealth] (1) to (4);
		\draw [style=stealth] (8) to (11);
		\draw [style=stealth] (3) to (4);
		\draw [style=stealth] (4) to (5);
		\draw [style=stealth] (2) to (5);
		\draw [style=stealth] (9) to (10);
		\draw [style=stealth] (10) to (11);
		\draw [style=dashouille] (7) to (10);
	\end{pgfonlayer}
\end{tikzpicture}
\]
We can check that $\delta$ makes the two front faces commute. Indeed, the right and left faces commute by using the fact that $\ell^\N$ and $f'$ are epimorphisms respectively.

The commutativity of the above diagram and \cref{usefulproposemiab} implies that 

\[ \begin{tikzpicture}[descr/.style={fill=white},baseline=(B.base),yscale = 0.8]
\node (B) at (0,2) {$\LN$};
\node (C) at (2,2) {$\T' / \ker(\ell^\N)$};
\node (E) at (4,2) {$\Q'$};
 \node (O1) at (-2,2) {$1$};
\node (O2) at (6,2) {$1$};
  \path[-stealth,font=\scriptsize]
  (O1) edge node[above] {$ $} (B)
(E) edge node[above] {$ $} (O2)
 (B.east)  edge node[above] {$j'$} (C.west)
 (C.east)  edge node[above] {$p'$} (E.west);
\end{tikzpicture}\]
is the pullback of 
\begin{tikzpicture}[descr/.style={fill=white},baseline=(A.base),scale = 0.8] 
\node (A) at (0,0) {$\T/ \ker(\ell^\N)$};
\node (B) at (2.5,0) {$\Q$};
\node (C) at (-2.5,0) {$\LN$};
\node (D) at (5,0) {$1$};
\node (E) at (-5,0) {$1$};
\path[-stealth,font=\scriptsize]
(C) edge node[above] {$j$} (A)
(A) edge node[above] {$p$} (B)
(B) edge node[above] {$ $} (D)
(E) edge node[above] {$ $} (C)
;
\end{tikzpicture} 
along $g$.
\end{proof}

\begin{remark}
In \cite{FS}, the construction of the fiberwise localization in the category of groups was functorial, therefore from the morphism $\T' \to \T$ between the pullback sequence and the sequence itself we have directly a morphism between the fiberwise localization of the pullback sequence and the fiberwise localization of the original sequence. In other words the map $\delta$ comes for free in contrast to the category of crossed modules where we have to build the map $\delta$ explicitly. 
\end{remark}


\section{Conditional flatness and admissibility}\label{section CF and Ad}

In this section, we develop a simpler characterisation of conditional flatness, thanks to the results of the previous section. We introduce the notion of admissibility for the class of regular epimorphisms and show that it is equivalent to conditional flatness. 
With this equivalence, we can easily establish conditional flatness for a given localization functor. We observe that some properties of localization functors, such as right-exactness, imply directly admissibility for the class of regular epimorphism.\newline

The first step allows us to restrict the definition of conditional flatness (\cref{defcondflat}) to fiberwise localizations of $\L$-flat exact sequences (\cref{1ststep}). More precisely, we show that the pullback of an $\L$-flat exact sequence is $\L$-flat if and only if the pullback of its fiberwise localization is so.

\begin{lemma}\label{1ststep}
 Let $\L$ be a regular-epi localization functor. Then $\L$ is conditionally flat if and only if for any  $\L$-flat exact sequence \begin{tikzpicture}[descr/.style={fill=white},baseline=(A.base),scale = 0.8] 
\node (A) at (0,0) {$\T$};
\node (B) at (2.5,0) {$\Q$};
\node (C) at (-2.5,0) {$\N$};
\node (D) at (5,0) {$1$};
\node (E) at (-5,0) {$1$};
\path[-stealth,font=\scriptsize]
(C) edge node[above] {$\kappa$} (A)
(A) edge node[above] {$\alpha$} (B)
(B) edge node[above] {$ $} (D)
(E) edge node[above] {$ $} (C);
\end{tikzpicture} with $\N$ an $\L$-local crossed module, the pullback sequence along any morphism $\Q' \rightarrow \Q$ is $\L$-flat. 
\end{lemma}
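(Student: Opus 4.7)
The forward implication is immediate from the definition of conditional flatness: an $\L$-flat sequence with $\L$-local kernel is a particular case of an $\L$-flat sequence, so the pullback is $\L$-flat by hypothesis. The interesting direction is the converse, and the plan is to exploit the fiberwise localization machinery developed in the previous section in order to reduce the general case to the case of an $\L$-local kernel.

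Given an arbitrary $\L$-flat exact sequence $1 \to \N \to \T \to \Q \to 1$ and a morphism $g \colon \Q' \to \Q$, I would first invoke \cref{Lflatfiberwise} to produce its fiberwise localization $1 \to \LN \to \bar{\T} \to \Q \to 1$, where $\bar{\T} := \T/\ker(\ell^\N)$ and the induced comparison map $\T \to \bar{\T}$ is an $\L$-equivalence. The key observation is that this new sequence is itself $\L$-flat with $\L$-local kernel: applying $\L$, the kernel $\LN$ is unchanged (being $\L$-local), the middle term becomes $\L\bar{\T} \cong \LT$ since $\T \to \bar{\T}$ is an $\L$-equivalence, and the quotient remains $\LQ$; hence the $\L$-localized sequence is isomorphic to the $\L$-localization of the original exact sequence, which is exact by assumption.

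Since the kernel $\LN$ is $\L$-local, the hypothesis applies to this fiberwise localization. Pulling back along $g$ therefore yields an $\L$-flat exact sequence. By \cref{prop:pullbackfiber=fiberpullback}, this pullback coincides with the fiberwise localization of the pullback of the original sequence, namely $1 \to \LN \to \T'/\ker(\ell^\N) \to \Q' \to 1$, where $\T'$ denotes the pullback of $\T \to \Q$ along $g$.

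To conclude, I would apply $\L$ to this last exact sequence. The middle term is $\L(\T'/\ker(\ell^\N))$, and since the comparison map $\T' \to \T'/\ker(\ell^\N)$ is an $\L$-equivalence by construction of the fiberwise localization, it is canonically isomorphic to $\L\T'$. Reading the resulting short exact sequence back as the $\L$-localization of $1 \to \N \to \T' \to \Q' \to 1$ proves that this pullback is $\L$-flat, which is the desired conclusion. The argument is a clean two-step reduction and no step is particularly delicate; the only point requiring attention is the compatibility of the various identifications of middle terms with the comparison maps, but this is built into the commutative diagrams furnished by \cref{lem:fiberwiseforpb} and \cref{prop:pullbackfiber=fiberpullback}.
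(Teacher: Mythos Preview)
Your argument is correct and follows the same route as the paper: reduce to the case of an $\L$-local kernel via fiberwise localization, then use \cref{prop:pullbackfiber=fiberpullback} together with the fact that the comparison maps $\ell^\N$ and $f'$ are $\L$-equivalences to transfer $\L$-flatness between the pullback and its fiberwise localization. Your write-up is in fact more explicit than the paper's, which leaves the verification that the fiberwise localization is itself $\L$-flat to the reader.
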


\begin{proof}
This is clear since $f'$ and $\ell^\N$ are $\L$-equivalences in this diagram: \[ \begin{tikzpicture}[descr/.style={fill=white},baseline=(A.base)yscale=0.8]
\node (A) at (0,0) {$\LN$};
\node (B) at (0,2) {$\N$};
\node (C) at (2,2) {$\T'$};
\node (D) at (2,0) {${\T'}/{\ker(\ell^\N)}$};
\node (E) at (4,2) {$\Q'$};
\node (F) at (4,0) {$\Q'$};
\node (O1) at (-2,2) {$1$};
\node (O1') at (-2,0) {$1$};
\node (O2) at (6,2) {$1$};
\node (O2') at (6,0) {$1$};
  \path[-stealth,font=\scriptsize]
  (O1) edge node[above] {$ $} (B)
(E) edge node[above] {$ $} (O2)
(F) edge node[above] {$ $} (O2')
(O1') edge node[above] {$ $} (A)
  (A.east) edge node[below] {$j'$} (D.west) 
 (C.south) edge node[right] {$f'$} (D.north) 
 (B.south) edge node[left] {$\ell_\N$} (A.north) 
 (B.east)  edge node[above] {$\kappa'$} (C.west)
 (C.east)  edge node[above] {$\pi_{\Q'}$} (E.west)
 (D.east) edge node[below] {$p'$} (F.west);
 \draw[commutative diagrams/.cd, ,font=\scriptsize]
(E) edge[commutative diagrams/equal] (F);
\end{tikzpicture}
\]
The top row is thus $\L$-flat if and only if so is the bottom row and we conclude by \cref{prop:pullbackfiber=fiberpullback}.
\end{proof}

The previous lemma allows us to follow the approach introduced in \cite{FS}. For the sake of completeness, we give an explicit proof of the following results even if the arguments are similar to 
the group theoretical ones.  

\begin{prop}\label{2ndstep}
 Let $\L$ be a regular-epi localization functor. Then $\L$ is conditionally flat if and only if the pullback of any exact sequence of $\L$-local objects is $\L$-flat. 
\end{prop}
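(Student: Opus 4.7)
The argument is by reduction, building on Lemma~\ref{1ststep}. The ``only if'' direction is transparent: given an exact sequence $1 \to \N \to \T \to \Q \to 1$ of $\L$-local objects, applying $\L$ recovers the same sequence up to the canonical isomorphisms $\ell^\N$, $\ell^\T$, $\ell^\Q$, so the sequence is itself $\L$-flat and conditional flatness delivers $\L$-flatness of any of its pullbacks.

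For the converse, Lemma~\ref{1ststep} reduces the verification of conditional flatness to the following statement: for every $\L$-flat exact sequence
\[
1 \to \N \to \T \xrightarrow{\alpha} \Q \to 1
\]
in which $\N$ is $\L$-local, and every morphism $g\colon \Q' \to \Q$, the pullback along $g$ is still $\L$-flat. Applying $\L$ to the displayed sequence and identifying $\L\N$ with $\N$ (since $\N$ is local) produces a second exact sequence $1 \to \N \to \L\T \xrightarrow{\L\alpha} \L\Q \to 1$, this time made entirely of $\L$-local objects. The hypothesis, applied to this sequence pulled back along the composite $\ell^\Q \circ g\colon \Q' \to \L\Q$, yields an $\L$-flat exact sequence $1 \to \N \to \T'' \to \Q' \to 1$, where $\T'' := \L\T \times_{\L\Q} \Q'$.

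The compatible pair $(\ell^\T, g)$ induces, by the universal property of pullbacks, a comparison morphism $\T' \to \T''$ from the original pullback $\T' = \T \times_\Q \Q'$ to $\T''$. This morphism fits into a map of short exact sequences which is the identity on the kernel $\N$ (using $\ell^\N = \mathrm{id}_\N$) and on the quotient $\Q'$. The Short Five Lemma, valid in the semi-abelian category $\sf XMod$ (see \cite{BB}), then forces $\T' \to \T''$ to be an isomorphism. Consequently the $\L$-flatness of the sequence involving $\T''$ transfers to the sequence involving $\T'$, which is what we wanted to prove.

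The mildly delicate point will be to spell out that the comparison morphism $\T' \to \T''$ is indeed compatible with the kernel inclusions of $\N$ and with the projections onto $\Q'$, so that the Short Five Lemma actually applies; this is a routine diagram chase relying on the naturality of the coaugmentation $\ell$ together with the universal properties of the two pullbacks involved.
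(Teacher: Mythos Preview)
Your proof is correct and follows essentially the same strategy as the paper: reduce via Lemma~\ref{1ststep} to sequences with local kernel, then identify the pullback $\T'$ of the original sequence along $g$ with the pullback of the localized sequence $\L\N \to \L\T \to \L\Q$ along $\ell^\Q \circ g$. The only cosmetic difference is in how this identification is made: the paper observes that the square with rows $\T \to \Q$ and $\L\T \to \L\Q$ is itself a pullback (by \cref{usefulproposemiab}(1), since the kernels coincide) and then pastes pullbacks, whereas you construct the comparison $\T' \to \T''$ directly and invoke the Short Five Lemma; these are equivalent manoeuvres in a semi-abelian category.
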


\begin{proof}
By the previous lemma it is sufficient to consider exact sequence with an $\L$-local kernel $\L\N$. Consider thus an $\L$-flat exact sequence \begin{tikzpicture}[descr/.style={fill=white},scale= 0.8, baseline=(A.base)] 
\node (A) at (0,0) {$\T$};
\node (B) at (2.5,0) {$\Q$};
\node (C) at (-2.5,0) {$\L\N$};
\node (O1) at (-5,0) {$1$};
\node (O2) at (5,0) {$1$};
\path[-stealth,font=\scriptsize]
(C.east) edge node[above] {$j$} (A.west)
(O1) edge node[above] {$ $} (C)
(B) edge node[above] {$ $} (O2)
(A) edge node[above] {$p$} (B);
\end{tikzpicture}. 
We build the following diagram where $g \colon \Q' \to \Q$ is any morphism of crossed modules and (1) is a pullback.  
\begin{center}
\begin{tikzpicture}[descr/.style={fill=white},yscale=1,baseline=(A.base)]
\node (A') at (0,-2) {$\LN$};
\node (A) at (0,0) {$\LN$};
\node (B) at (0,2) {$\LN$};
\node (C) at (3,2) {$\T'$};
\node (X) at (4.5,1) {$(1)$};
\node (X) at (4.5,-1) {$(2)$};
\node (D) at (3,0) {${\T}$};
\node (D') at (3,-2) {$\mathsf{L}\T$};
\node (E) at (6,2) {$\Q'$};
\node (F) at (6,0) {$\Q$};
\node (F') at (6,-2) {$\LQ$};
\node (O1) at (-2,2) {$1$};
\node (O1') at (-2,0) {$1$};
\node (O2) at (8,2) {$1$};
\node (O2') at (8,0) {$1$};
\node (O2'') at (8,-2) {$1$};
\node (O1'') at (-2,-2) {$1$};
  \path[-stealth,font=\scriptsize]
  (O1) edge node[above] {$ $} (B)
(E) edge node[above] {$ $} (O2)
(F) edge node[above] {$ $} (O2')
(F') edge node[above] {$ $} (O2'')
(O1'') edge node[above] {$ $} (A')
(O1') edge node[above] {$ $} (A)
  (A.east) edge node[below] {$j$} (D.west) 
    (A'.east) edge node[below] {$L(j)$} (D'.west) 
  (D) edge node[descr] {$\ell^{\T}$} (D')
   (F) edge node[descr] {$\ell^{\Q}$} (F')
 (C.south) edge node[descr] {$\pi_{\T}$} (D.north) 
 (B.east)  edge node[above] {$j'$} (C.west)
 (E.south) edge node[right] {$g$} (F.north)
 (C.east)  edge node[above] {$\pi_{\Q'}$} (E.west)
  (D'.east) edge node[below] {$L(p)$} (F'.west)
 (D.east) edge node[below] {$p$} (F.west);
 \draw[commutative diagrams/.cd,font=\scriptsize]
(A) edge[commutative diagrams/equal] (A')
(B) edge[commutative diagrams/equal] (A);
\end{tikzpicture}

\end{center}
We observe that since each row is exact, $(2)$ is a pullback by \cref{usefulproposemiab}, and then $(1)+(2)$ is also a pullback. Hence, 
the top row is the pullback of the bottom exact sequence of $\L$-local objects along the map $\ell^\Q \circ g$, which shows the claim.
\end{proof}

 \begin{definition}
 A localization functor $\L$ is said to be \emph{admissible} for the class of regular epimorphisms if it preserves any pullback of the form \[
\begin{tikzpicture}[descr/.style={fill=white},yscale=0.8]
\node (A) at (0,0) {$\LT$};
\node (B) at (0,2) {$\T'$};
\node (C) at (2,2) {$\Q$};
\node (D) at (2,0) {$\LQ$};
  \path[-stealth,font=\scriptsize]
 (B.south) edge node[left] {$\pi_\LT$} (A.north) 
 (C.south) edge node[right] {$\ell^\Q$} (D.north) 
  (B.east) edge node[above] {$\pi_{\Q}$} (C.west) 
 (A.east) edge node[below] {$\alpha$} (D.west);
\end{tikzpicture} 
\] where $\alpha$ is a regular epimorphism.
\end{definition}

\begin{theorem}\label{finalstep}
Let $\L$ be a regular-epi localization functor. Then the following statements are equivalent
\begin{enumerate}
    \item $\L$ is conditionally flat;
    \item $\L$ is admissible for the class of regular epimorphisms.
\end{enumerate}
\end{theorem}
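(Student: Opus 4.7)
My plan is to use \cref{2ndstep} as the pivot, which reformulates conditional flatness in terms of pullbacks of short exact sequences of $\L$-local objects being $\L$-flat. The direction $(1) \Rightarrow (2)$ is the easier one and proceeds as follows. Given an admissibility-form pullback with $\alpha\colon \LT \to \LQ$ a regular epimorphism, let $\K = \ker(\alpha)$. Because $\L$-local objects form a reflective subcategory of $\sf XMod$, they are closed under limits, so $\K$ is $\L$-local. This gives an exact sequence $1 \to \K \to \LT \to \LQ \to 1$ of local objects. By assumption its pullback along $\ell^\Q$ is $\L$-flat, so $\L$ applied to the pulled-back sequence $1 \to \K \to \T' \to \Q \to 1$ yields an exact sequence $1 \to \K \to \L\T' \to \LQ \to 1$ (using that $\L\K = \K$ and $\L\Q = \LQ$). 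Comparing the two exact sequences via the morphism $\L\T' \to \LT$ induced by $\pi_\LT\colon \T' \to \LT$ and applying the Short Five Lemma shows this morphism is an isomorphism, which is precisely admissibility for this pullback.

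For the converse $(2) \Rightarrow (1)$, I take a short exact sequence $1 \to \N \to \T \to \Q \to 1$ of $\L$-local objects and any morphism $g\colon \Q' \to \Q = \LQ$, and factor $g$ through $\L\Q'$ as $g = \tilde{g} \circ \ell^{\Q'}$ using the universal property of localization. Then I paste two pullback squares: the right-hand square is the pullback of $\T \to \Q$ along $\tilde{g}$, yielding an intermediate object $\T''$ whose four corners $\LT, \L\Q', \LQ, \T''$ are all $\L$-local and whose bottom horizontal arrow $\T'' \to \L\Q'$ is a regular epimorphism (regular epis being stable under pullback in a semi-abelian category). The outer pullback is then $\T' = \T \times_\Q \Q'$. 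The left-hand square has the exact shape required by admissibility (its right vertical arrow is $\ell^{\Q'}$, its bottom arrow a regular epi between locals). Admissibility gives $\L\T' \cong \T''$, and since the kernel $\N$ is preserved both by pullback (so $\ker(\T' \to \Q') = \N = \ker(\T'' \to \L\Q')$) and by $\L$, the sequence $1 \to \N \to \L\T' \to \L\Q' \to 1$ is exact, witnessing the $\L$-flatness of the pulled-back sequence.

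The main obstacle is precisely the pasting trick in the converse direction: one has to manufacture an admissibility-shaped square from the arbitrary morphism $g$, and the factorization through $\L\Q'$ combined with pullback pasting is exactly what achieves this. Once that setup is in place, the rest is routine diagram-chasing relying on the basic stability of pullbacks, kernels and regular epimorphisms in a semi-abelian category recorded in \cref{usefulproposemiab}, together with the defining idempotency properties of $\L$.
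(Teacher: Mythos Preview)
Your proof is correct and follows essentially the same route as the paper: for $(2)\Rightarrow(1)$ you use \cref{2ndstep}, factor $g$ through $\ell^{\Q'}$, stack two pullback squares, observe the inner one consists of $\L$-local objects so that admissibility applies, and read off $\L$-flatness of the pulled-back sequence. The only difference is that the paper dismisses $(1)\Rightarrow(2)$ as ``trivial'' while you spell it out via the Short Five Lemma; your argument there is fine (an exact sequence of local objects is tautologically $\L$-flat, conditional flatness makes its pullback along $\ell^\Q$ $\L$-flat, and the Short Five Lemma then forces $\L\T'\to\LT$ to be an isomorphism, which is what preservation of the pullback amounts to since $\L\ell^\Q$ is an isomorphism).
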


\begin{proof}
The implication $(1) \Rightarrow (2)$ is trivial, so let us prove $(2) \Rightarrow (1)$. Consider any exact sequence of $\L$-local objects 
\[ 
\begin{tikzpicture}[descr/.style={fill=white},baseline=(A.base),
xscale=0.8] 
\node (A) at (0,0) {$\LT$};
\node (B) at (2.5,0) {$\LQ$};
\node (C) at (-2.5,0) {$\LN$};
\node (O1) at (-4.5,0) {$1$};
\node (O2) at (4.5,0) {$1$};
\path[-stealth,font=\scriptsize]
(C.east) edge node[above] {$ $} (A.west)
;
\path[-stealth,font=\scriptsize]
(O1) edge node[above] {$ $} (C)
(B) edge node[above] {$ $} (O2)
(A) edge node[above] {$\alpha $} (B);
\end{tikzpicture} 
\] 
and any morphism $g \colon \A \to \LQ$. By \cref{2ndstep} conditional flatness is established if we prove that the pullback of the exact sequence along $g$ is $\L$-flat.
Let us first observe that this morphism $g$ factors through $\LA$ via the universal property of the localization:
\[
\begin{tikzpicture}[descr/.style={fill=white},baseline=(A.base),
scale=0.8] 
\node (A) at (0,0) {$\A$};
\node (B) at (2.5,0) {$\LA$};
\node (A') at (2.5,-2) {$\LQ$};
\path[-stealth,font=\scriptsize]
(A) edge node[above] {$\ell^\A$} (B)
(A) edge node[descr] {$ g $} (A');
\path[dashed,-stealth,font=\scriptsize]
(B) edge node[right] {$ \tilde{g} $} (A');
\end{tikzpicture} 
\]
Hence, we can first construct the pullback of \begin{tikzpicture}[descr/.style={fill=white},baseline=(A.base),
xscale=0.8] 
\node (A) at (0,0) {$\LT$};
\node (B) at (2.5,0) {$\LQ$};
\node (C) at (-2.5,0) {$\LN$};
\node (O1) at (-4.5,0) {$1$};
\node (O2) at (4.5,0) {$1$};
\path[-stealth,font=\scriptsize]
(C.east) edge node[above] {$ $} (A.west)
;
\path[-stealth,font=\scriptsize]
(O1) edge node[above] {$ $} (C)
(B) edge node[above] {$ $} (O2)
(A) edge node[above] {$\alpha $} (B);
\end{tikzpicture} 
along $\tilde{g}$ and then pullback the resulting sequence along $\ell^\A$:
\[ 
\begin{tikzpicture}[descr/.style={fill=white},scale=1,baseline=(A.base),yscale=1]
\node (A') at (0,-2) {$\LN$};
\node (A) at (0,0) {$\LN$};
\node (B) at (0,2) {$\LN$};
\node (C) at (2,2) {$\T''$};
\node (D) at (2,0) {$\T'$};
\node (D') at (2,-2) {$\LT$};
\node (E) at (4,2) {$\A$};
\node (F) at (4,0) {$\LA$};
\node (F') at (4,-2) {$\LQ$};
\node (O1) at (-2,2) {$1$};
\node (O1') at (-2,0) {$1$};
\node (O2) at (6,2) {$1$};
\node (O2') at (6,0) {$1$};
\node (O2'') at (6,-2) {$1$};
\node (O1'') at (-2,-2) {$1$};
  \path[-stealth,font=\scriptsize]
  (O1) edge node[above] {$ $} (B)
(E) edge node[above] {$ $} (O2)
(F) edge node[above] {$ $} (O2')
(F') edge node[above] {$ $} (O2'')
(O1'') edge node[above] {$ $} (A')
(O1') edge node[above] {$ $} (A);
\draw[-stealth,>=latex] 
(E.south east) to[bend left=25]node[descr] {$g$} (F'.north east);
  \path[-stealth]
  (A.east) edge node[below] {$ $} (D.west) 
    (A'.east) edge node[below] {$ $} (D'.west) 
  (D) edge node[descr] {$\pi_\LT$} (D')
   (F) edge node[right] {$\tilde{g}$} (F')
 (C.south) edge node[right] {$ $} (D.north) 
 (B.east)  edge node[above] {$ $} (C.west)
 (E.south) edge node[right] {$\ell^\A$} (F.north)
 (C.east)  edge node[above] {$\pi_{\A}$} (E.west)
  (D'.east) edge node[below] {$\alpha$} (F'.west)
 (D.east) edge node[below] {$\pi_\LA$} (F.west);
 \draw[commutative diagrams/.cd, ,font=\scriptsize]
(A) edge[commutative diagrams/equal] (A')
(B) edge[commutative diagrams/equal] (A);
\end{tikzpicture}
\]
Since the category of $\L$-local objects is closed under pullbacks, $\T'$ is $\L$-local and we can apply condition (2) to conclude that the upper row is $\L$-flat. This observation implies that the pullback of
$\begin{tikzpicture}[descr/.style={fill=white},baseline=(A.base),
scale=0.8] 
\node (A) at (0,0) {$\LT$};
\node (B) at (2.5,0) {$\LQ$};
\node (C) at (-2.5,0) {$\LN$};
\node (O1) at (-4.5,0) {$1$};
\node (O2) at (4.5,0) {$1$};
\path[-stealth,font=\scriptsize]
(C.east) edge node[above] {$ $} (A.west)
;
\path[-stealth,font=\scriptsize]
(O1) edge node[above] {$ $} (C)
(B) edge node[above] {$ $} (O2)
(A) edge node[above] {$\alpha $} (B);
\end{tikzpicture}$ along $g$ is an $\L$-flat sequence as desired.
\end{proof}

The above theorem gives an easier characterisation of conditionally flatness in the category of crossed modules. It will be useful in rest of the article. 

\begin{remark}
Admissibility for the class of regulars epimorphisms in the context of semi-abelian  categories is studied in \cite{GranScherer}. Similar results are proven for functors of localizations that admit a functorial fiberwise localization. Note that their result does not imply \cref{finalstep} since localization functors of crossed modules do not admit functorial fiberwise localizations in general. However, the implication ``$(1)$ implies $(2)$", in \cref{finalstep}, holds even for not necessarily regular-epi localization functors.
\end{remark}



\begin{prop}
If $\L \colon \sf XMod \to \sf XMod$ is a localization functor that is right exact in $\sf XMod$, then $\L$ is admissible for the class of regular epimorphisms.
\end{prop}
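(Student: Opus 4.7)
The plan is to show that the canonical comparison $\L\pi_\LT \colon \L\T' \to \L\LT \cong \LT$ induced by the pullback square is an isomorphism, which is exactly what admissibility requires in this setting. Indeed, since $\LQ$ is $\L$-local, the co-augmentation $\ell^\Q \colon \Q \to \LQ$ becomes the isomorphism $\L\ell^\Q = \ell^{\LQ}$ after applying $\L$, so the pullback of $\L\alpha$ along $\L\ell^\Q$ is canonically identified with $\LT$. Consequently, $\L$ preserves the pullback square exactly when the middle comparison map is an isomorphism.

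First I would extract kernels. Let $\kappa \colon \K \to \LT$ denote the kernel of the regular epimorphism $\alpha$. Since $\sf XMod$ is semi-abelian, regular epimorphisms are stable under pullback and kernels are transported along pullback squares, so $\pi_\Q \colon \T' \to \Q$ is itself a regular epimorphism with kernel $\K$. This produces a morphism between the two short exact sequences $1 \to \K \to \T' \to \Q \to 1$ and $1 \to \K \to \LT \to \LQ \to 1$, whose three vertical components are the identity on $\K$, the projection $\pi_\LT$, and the coaugmentation $\ell^\Q$ respectively.

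Next I would apply $\L$. Interpreting right-exactness as the preservation of short exact sequences, this yields two short exact sequences $1 \to \L\K \to \L\T' \to \L\Q \to 1$ and $1 \to \L\K \to \LT \to \LQ \to 1$ (using that $\L\LT \cong \LT$ and $\L\LQ \cong \LQ$), connected by a morphism whose vertical maps are $\mathrm{id}_{\L\K}$, $\L\pi_\LT$, and $\L\ell^\Q$. The leftmost map is the identity and the rightmost map is an isomorphism since $\LQ$ is $\L$-local. The short five lemma, valid in any homological category (see \cite{BB}), then forces $\L\pi_\LT$ to be an isomorphism, establishing admissibility.

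The main subtlety in this strategy is the interpretation of ``right exact''. Preservation of cokernels alone is automatic for any reflector and does not suffice, because the short five lemma also needs exactness of the $\L$-image sequences on the left. The hypothesis must therefore be read as the preservation of short exact sequences (equivalently, the Birkhoff property for the associated reflective subcategory in the semi-abelian setting), which is precisely the input required to close the argument.
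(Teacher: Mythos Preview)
Your argument is correct and close in spirit to the paper's, but the route differs. The paper establishes the stronger conditional flatness: it pulls back an arbitrary $\L$-flat exact sequence $\N \to \T \to \Q$ along an arbitrary $g\colon \Q' \to \Q$, applies $\L$, observes that $\L\kappa'$ is a monomorphism since it is a factor of the normal mono $\L\kappa$ (mono by $\L$-flatness of the bottom row), then asserts normality of $\L\kappa'$ ``by right-exactness'' and concludes admissibility via \cref{finalstep}. You bypass conditional flatness and \cref{finalstep} entirely by working directly with the admissibility square and invoking the Short Five Lemma on the two short exact sequences with common kernel; this is more self-contained and does not rely on the earlier reduction machinery. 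Both arguments ultimately require reading ``right exact'' as preservation of short exact sequences, and your explicit remark that cokernel preservation alone is automatic for any reflector (hence vacuous as a hypothesis) is a genuine clarification the paper leaves implicit in the phrase ``normality follows then by right-exactness''.
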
 

\begin{proof}
Let us consider the following pullback of an $\L$-flat exact sequence of crossed modules along a morphism $g \colon \Q' \to \Q$.
\[
\begin{tikzpicture}[descr/.style={fill=white},baseline=(A.base),scale=0.8] 
\node (A) at (0,0) {$\T'$};
\node (B) at (2.5,0) {$\Q'$};
\node (C) at (-2.5,0) {$\N$};
\node (A') at (0,-2) {$\T$};
\node (B') at (2.5,-2) {$\Q$};
\node (C') at (-2.5,-2) {$\N$};
\node (O1) at (-4.5,0) {$1$};
\node (O1') at (-4.5,-2) {$1$};
\node (O2) at (4.5,0) {$1$};
\node (O2') at (4.5,-2) {$1$};
\node (X) at (1.25,-1) {$(1)$};
\draw[commutative diagrams/.cd, ,font=\scriptsize]
(C) edge[commutative diagrams/equal] (C');
\path[-stealth,font=\scriptsize]
(O1) edge node[above] {$ $} (C)
(B) edge node[above] {$ $} (O2)
(B') edge node[above] {$ $} (O2')
(O1') edge node[above] {$ $} (C')
(B.south) edge node[right] {$ g $}  (B'.north)

(A.south) edge node[left] {$ \pi_\T $} (A'.north)
(C'.east) edge node[above] {$ \kappa $} (A'.west)
(A') edge node [below] {$f$} (B') ([yshift=2pt]A'.east)
(C.east) edge node[above] {$ \kappa' $} (A.west)
(A) edge node[above] {$\pi_{\Q'}$} (B);
\end{tikzpicture} \]
By applying $\L$ to this diagram, we obtain (since $\L$ is right exact) the following diagram

\[
\begin{tikzpicture}[descr/.style={fill=white},baseline=(A.base),scale=0.8] 
\node (A) at (0,0) {$\LT'$};
\node (B) at (2.5,0) {$\LQ'$};
\node (C) at (-2.5,0) {$\LN$};
\node (A') at (0,-2) {$\LT$};
\node (B') at (2.5,-2) {$\LQ$};
\node (C') at (-2.5,-2) {$\LN$};
\node (O1') at (-4.5,-2) {$1$};
\node (O2) at (4.5,0) {$1$};
\node (O2') at (4.5,-2) {$1$};
\draw[commutative diagrams/.cd, ,font=\scriptsize]
(C) edge[commutative diagrams/equal] (C');
\path[-stealth,font=\scriptsize]
(B) edge node[above] {$ $} (O2)
(B') edge node[above] {$ $} (O2')
(O1') edge node[above] {$ $} (C')
(B.south) edge node[right] {$ \L(g) $}  (B'.north)
(A.south) edge node[left] {$ \L(\pi_\T )$} (A'.north)
(C'.east) edge node[below] {$\L(\kappa) $} (A'.west)
(A') edge node [below] {$\L(f)$} (B') ([yshift=2pt]A'.east)
(C.east) edge node[above] {$\L(\kappa') $} (A.west)
(A) edge node[above] {$\L(\pi_{\Q'})$} (B);
\end{tikzpicture} \]
Since $\L(\kappa) = \L(\pi_\T) \circ \L(\kappa')$ is a (normal) monomorphism, we conclude that $\L(\kappa')$ is a monomorphism. 
 Normality follows then by right-exactness and we conclude by \cref{finalstep}.
\end{proof}
Note that this proof holds in any semi-abelian category.

\begin{corollary}
The functor of abelianization $\sf Ab \colon \sf XMod \to \sf XMod$ is admissible for the class of regular epimorphisms.
\end{corollary}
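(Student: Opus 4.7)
The plan is to apply the preceding proposition, which asserts that any right-exact localization functor on $\sf XMod$ is admissible for the class of regular epimorphisms. Thus the whole task reduces to verifying that $\sf Ab$ is right exact.

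The cleanest route is to observe that $\sf Ab$ is, by construction, the reflector onto the full subcategory of \emph{abelian} crossed modules, namely those triples $(N_1, N_2, \partial)$ in which $N_1$ and $N_2$ are abelian groups and the action of $N_2$ on $N_1$ is trivial (which is forced by the Peiffer identity~\eqref{peiffer} once $N_1$ is abelian and $\partial$ has abelian image). As a reflector, $\sf Ab$ is left adjoint to the inclusion of this subcategory into $\sf XMod$, and therefore preserves all colimits; in particular it preserves cokernels and hence is right exact.

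If one prefers a hands-on verification bypassing the adjunction, the explicit formula from Example~\ref{Ab} does the job. Given a regular epimorphism $\alpha=(\alpha_1,\alpha_2)\colon\N\to\M$, Remark~\ref{rem:eprireg} tells us that both $\alpha_1$ and $\alpha_2$ are surjective homomorphisms of groups, so the induced maps $N_1/[N_2,N_1]\to M_1/[M_2,M_1]$ and $N_2/[N_2,N_2]\to M_2/[M_2,M_2]$ are surjective as well, i.e.\ $\sf Ab(\alpha)$ is a regular epimorphism. Combining this with the componentwise description of cokernels of normal subcrossed modules recalled in Section~1 (levelwise quotient by normal subgroups) yields right-exactness of $\sf Ab$ directly.

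Once right-exactness is secured, the previous proposition applies verbatim and gives the admissibility of $\sf Ab$ for the class of regular epimorphisms. I do not expect any serious obstacle: the only thing one needs to be careful about is checking that the ambient notion of right-exactness used in the proposition (preservation of regular epimorphisms plus normality of the image) matches what left adjointness delivers, but this is standard in any semi-abelian category.
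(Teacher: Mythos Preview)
Your proposal is correct and follows the same strategy as the paper: reduce to the preceding proposition by establishing that $\sf Ab$ is right exact. The paper's own proof is a one-liner asserting that right-exactness can be checked componentwise, which is precisely your hands-on route (b); your additional remark that $\sf Ab$ is a reflector and hence a left adjoint preserving cokernels is a clean alternative the paper does not spell out.
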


\begin{proof}
The functor of abelianization $\sf Ab \colon \sf XMod \to \sf XMod$ is right exact. Since the exactness can be shown component-wise, the result follows.
\end{proof}

Sometimes it is handy to rely on our group theoretical knowledge
to construct simple examples of localization functors and how they
behave on crossed modules. The proof of the following proposition is based on a counter-example coming from groups via the functor $\sf X$ defined in \cref{lemma_lefttruncation}.

\begin{prop}
There are regular-epi localization functors $\L \colon \sf XMod \to \sf XMod$ that are not admissible for the class of regular epimorphisms.
\end{prop}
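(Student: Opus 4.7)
The plan is to import a non-admissibility phenomenon from the category of groups to that of crossed modules via the left adjoint $\X \colon \mathsf{Grp} \to \mathsf{XMod}$ of the truncation functor (see \cref{lemma_lefttruncation}). The argument splits into three steps.

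\emph{Step 1 (group-theoretic input).} I would fix a regular-epi localization functor $L \colon \mathsf{Grp} \to \mathsf{Grp}$ that is known not to be admissible for the class of regular epimorphisms; such functors have been exhibited in the literature on group localizations. Non-admissibility yields $L$-local groups $A$ and $B$, a regular epimorphism $\alpha \colon A \to B$, a group $Q$ with $LQ = B$ and coaugmentation $\ell^Q \colon Q \to B$, such that the pullback $P := A \times_B Q$ in $\mathsf{Grp}$ satisfies $L(P) \not\cong A$.

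\emph{Step 2 (lifting $L$ to $\mathsf{XMod}$).} Define $\L \colon \mathsf{XMod} \to \mathsf{XMod}$ by
\[
\L(\T) := \bigl(1,\; L(T_2/\partial^\T(T_1)),\; 1\bigr),
\]
with coaugmentation $\ell^\T$ given at level $1$ by the unique map $T_1 \to 1$ and at level $2$ by the composite $T_2 \twoheadrightarrow T_2/\partial^\T(T_1) \to L(T_2/\partial^\T(T_1))$. Since a morphism $\T \to \X G$ is the data of a group homomorphism $T_2 \to G$ that factors through $T_2/\partial^\T(T_1)$, the universal property shows that $\L$ is a localization whose local objects form the full subcategory $\{\X G : G \text{ is } L\text{-local}\}$. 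Because $L$ is regular-epi, both components of $\ell^\T$ are surjective, hence $\L$ is a regular-epi localization by \cref{rem:eprireg}.

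\emph{Step 3 (lifting the counter-example).} Applying $\X$ to the pullback of Step~1 and using that limits in $\mathsf{XMod}$ are computed levelwise, we obtain a pullback square
\[
\begin{tikzpicture}[descr/.style={fill=white},yscale=0.8,baseline=(A.base)]
\node (A) at (0,0) {$\X A$};
\node (B) at (0,2) {$\X P$};
\node (C) at (3,2) {$\X Q$};
\node (D) at (3,0) {$\X B$};
\path[-stealth,font=\scriptsize]
(B.south) edge node[left] {} (A.north)
(C.south) edge node[right] {$\X \ell^Q$} (D.north)
(B.east) edge node[above] {} (C.west)
(A.east) edge node[below] {$\X \alpha$} (D.west);
\end{tikzpicture}
\]
in $\mathsf{XMod}$, in which $\X A$ and $\X B$ are $\L$-local, $\X \alpha$ is a regular epimorphism, and $\X \ell^Q$ coincides with the coaugmentation $\ell^{\X Q} \colon \X Q \to \L(\X Q) = \X B$. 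A direct computation gives $\L(\X G) = \X(LG)$ for every group $G$; applying $\L$ to the square, the top-left corner becomes $\X(LP)$, which by Step~1 is not isomorphic to $\X A = \L(\X A)$. Hence $\L$ does not preserve this pullback and is therefore not admissible for the class of regular epimorphisms.

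The main obstacle is Step~1: everything else follows cleanly from the levelwise description of limits in $\mathsf{XMod}$ and the compatibility of $\L$ with $\X$. Exhibiting an explicit non-admissible regular-epi localization on $\mathsf{Grp}$ — ideally an $L_f$ for a suitable regular epi $f$ — is the delicate input that makes the whole construction work.
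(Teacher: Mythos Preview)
Your approach is correct and essentially the same as the paper's: both import a non-admissible regular-epi localization from $\mathsf{Grp}$ via the embedding $\X$, relying on the fact that pullbacks in $\mathsf{XMod}$ are computed levelwise. The paper supplies the explicit group input you leave open (namely $L_\phi$ for $\phi\colon C_4 \to C_2$, from \cite[Theorem~5.1]{FS}) and lifts it as $\L_{\X\phi}$ rather than via your composite $\T \mapsto \X\bigl(L(T_2/\partial^\T(T_1))\bigr)$; these two lifts differ in general but coincide on the image of $\X$, which is where the counterexample lives.
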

\begin{proof}

We export via $\sf X \colon Grp \to XMod$ the example in \cite[Theorem~5.1]{FS} of a localization functor in groups that is not admissible for the class of regular epimorphisms. 

Let $\L_\phi$ be the localization functor induced by the projection $\phi \colon C_4 \to C_2 $, where $C_n$ denotes a cyclic group of order $n$.
It gives rise to a localization functor  $\sf L_{X\phi} \colon \sf XMod \to \sf XMod$. In particular, if we apply $\sf X$ to the extension of $\L_\phi$-local groups considered in \cite{FS}, we obtain an exact sequence of $\L_{\sf X \phi}$-local crossed modules:

\[ \begin{tikzpicture}[descr/.style={fill=white},baseline=(A.base),
xscale=1] 
\node (A) at (0,0) {$(1, \mathbb{Z})$};
\node (B) at (2.5,0)
{$(1,C_2)$};
\node (C) at (-2.5,0) 
{$(1, \mathbb{Z})$};
\node (O1) at (-4.5,0) {$1$};
\node (O2) at (4.5,0) {$1$};
\path[-stealth,font=\scriptsize]
(C.east) edge node[above] {$ $} (A.west)
;
\path[-stealth,font=\scriptsize]
(O1) edge node[above] {$ $} (C)
(B) edge node[above] {$ $} (O2)
(A) edge node[above] {$ $} (B);
\end{tikzpicture} \]
If we pullback along the morphism of crossed modules $\sf X \phi$, we obtain the following exact sequence 
\[ \begin{tikzpicture}[descr/.style={fill=white},baseline=(A.base),
xscale=1] 
\node (A) at (0,0) {$(1, \mathbb{Z} \times C_2) $};
\node (B) at (2.5,0)
{$(1,C_4)$};
\node (C) at (-2.5,0) 
{$(1, \mathbb{Z})$};
\node (O1) at (-4.5,0) {$1$};
\node (O2) at (4.5,0) {$1$};
\path[-stealth,font=\scriptsize]
(C.east) edge node[above] {$ $} (A.west)
;
\path[-stealth,font=\scriptsize]
(O1) edge node[above] {$ $} (C)
(B) edge node[above] {$ $} (O2)
(A) edge node[above] {$ $} (B);
\end{tikzpicture} \]

We conclude from \cite[Lemma~1.4]{MSS} that this exact sequence is not $\L_{\sf X \phi}$-flat. Indeed, if it was the case we would have a contradiction with the group theoretical observation in \cite{FS}.
\end{proof}

\section{Admissibility and acyclicity }

In the categories of groups and topological spaces, the localization functor $\L$ is a nullification functor if and only if the kernels of the localization morphisms are $\L$-acyclic (which means that $\L\ker(\ell^\M)$ is trivial for any $\M \in \sf XMod$). This characterisation implies in particular that any nullification functor is admissible for the class of regular epimorphisms. 
It is interesting to notice that even if nullification functors of crossed modules do not have acyclic kernels, we have a similar result in $\sf XMod$: the $\sf L$-acyclicity of the kernels of localization morphisms implies the admissibility.

\begin{prop}
\label{prop:acyclicimpliesadmissible}
Let $\L\colon \sf XMod \to XMod$ be a regular-epi localization functor such that $\ker(\ell^\M \colon \M \to \L\M )$ is $\L$-acyclic for any $\M \in \sf XMod$. Then $\L$ is admissible for the class of regular epimorphisms.
\end{prop}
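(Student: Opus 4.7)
The plan is to verify admissibility directly by showing that the left-hand map $\pi_{\LT}\colon \T'\to \LT$ of the defining square is itself an $\L$-equivalence. Since $\LT$ is already $\L$-local, this will give $\L(\T')\cong\LT$ via $\L(\pi_{\LT})$; applying $\L$ to the defining square then leaves the bottom edge $\alpha$ unchanged and turns the right-hand edge $\L(\ell^{\Q})$ into an isomorphism of $\LQ$, so the resulting square is trivially a pullback, which is exactly admissibility.

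The first step is to identify the kernel of $\pi_{\LT}$. Because $\ell^{\Q}$ is a regular epimorphism and regular epimorphisms are stable under pullback in the semi-abelian category $\sf XMod$, the morphism $\pi_{\LT}$ is itself a regular epimorphism. A direct computation (or \cref{usefulproposemiab}(1) applied to the diagram obtained by kerneling the vertical arrows of the defining square) identifies $\ker(\pi_{\LT})$ with $\ker(\ell^{\Q})$, which is $\L$-acyclic by hypothesis.

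Next, the plan is to verify the universal property characterizing $\L$-equivalences. For any morphism $f\colon \T'\to \M$ with $\M$ an $\L$-local crossed module, the composite $\ker(\ell^{\Q})\hookrightarrow \T'\xrightarrow{f}\M$ is a morphism from an $\L$-acyclic object into an $\L$-local one, so it factors through $\L(\ker(\ell^{\Q}))=1$ and is therefore the zero morphism. Since $\sf XMod$ is semi-abelian, the regular epimorphism $\pi_{\LT}$ is normal (see \cref{rem:eprireg}) and coincides with the cokernel of its own kernel, so $f$ factors uniquely through $\pi_{\LT}$. This yields the bijection $\mathrm{Hom}(\LT,\M)\cong\mathrm{Hom}(\T',\M)$ for every local $\M$, proving that $\pi_{\LT}$ is an $\L$-equivalence and hence that $\L(\pi_{\LT})$ is an isomorphism.

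The only mildly delicate point is the combination of the kernel identification with the cokernel factorization, both of which rest on standard semi-abelian calculus already exploited elsewhere in the paper. Beyond this, the argument is a clean application of the acyclicity hypothesis together with stability of regular epimorphisms under pullback; no appeal to fiberwise localization or to the conditional-flatness reformulation of \cref{finalstep} is needed.
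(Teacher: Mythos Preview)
Your argument is correct and follows essentially the same route as the paper's proof: identify $\ker(\pi_{\LT})$ with $\ker(\ell^{\Q})$, use that $\pi_{\LT}$ is a regular (hence normal) epimorphism to recognize it as the cokernel of this kernel, and then exploit $\L$-acyclicity to factor any map into a local object through $\pi_{\LT}$, concluding that $\pi_{\LT}$ is an $\L$-equivalence. The paper presents exactly this argument, only slightly more tersely and without your explicit paragraph explaining why $\L(\pi_{\LT})$ being an isomorphism forces the $\L$-image of the square to be a pullback.
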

  
\begin{proof}
Consider the pullback of 
$\begin{tikzpicture}[descr/.style={fill=white},baseline=(A.base),
xscale=0.8] 
\node (A) at (0,0) {$\LT$};
\node (B) at (2.5,0) {$\LQ$};
\node (C) at (-2.5,0) {$\LN$};
\node (O1) at (-4.5,0) {$1$};
\node (O2) at (4.5,0) {$1$};
\path[-stealth,font=\scriptsize]
(C.east) edge node[above] {$\kappa $} (A.west)
;
\path[-stealth,font=\scriptsize]
(O1) edge node[above] {$ $} (C)
(B) edge node[above] {$ $} (O2)
(A) edge node[above] {$ f $} (B);
\end{tikzpicture}$ 
along $\ell^\Q \colon \Q \to \LQ$:

\[ 
\begin{tikzpicture}[descr/.style={fill=white},baseline=(A.base),yscale=0.8]
\node (A) at (0,0) {$\LN$};
\node (B) at (0,2) {$\LN$};
\node (C) at (2,2) {$\T'$};
\node (D) at (2,0) {$\LT$};
\node (E) at (4,2) {$\Q$};
\node (F) at (4,0) {$\LQ$};
\draw[commutative diagrams/.cd, ,font=\scriptsize]
(B) edge[commutative diagrams/equal] (A);
\node (O1) at (-2,2) {$1$};
\node (O1') at (-2,0) {$1$};
\node (O2) at (6,2) {$1$};
\node (O2') at (6,0) {$1$};
  \path[-stealth,font=\scriptsize]
  (O1) edge node[above] {$ $} (B)
(E) edge node[above] {$ $} (O2)
(F) edge node[above] {$ $} (O2')
(O1') edge node[above] {$ $} (A)
  (A.east) edge node[below] {$\kappa$} (D.west) 
 (C.south) edge node[right] {$\pi_{\LT}$} (D.north) 
 (B.east)  edge node[above] {$\kappa'$} (C.west)
 (E.south) edge node[right] {$\ell^\Q$} (F.north)
 (C.east)  edge node[above] {$\pi_Q$} (E.west)
 (D.east) edge node[below] {$f$} (F.west);
\end{tikzpicture}
\]

We need to prove that $\pi_{\LT}$ is an $\L$-equivalence. Since $\sf XMod$ is a pointed protomodular category and $\ell^\Q$ is a regular epi by assumption, we know that $\pi_{\LT}$ is the cokernel of $\ker(\ell^\Q) \cong \ker(\pi_{\LT}) \to \T'$. Let $\Y$ be a local object, for any $g \colon \T' \to \Y$ we have the following diagram:
\[
\begin{tikzpicture}[descr/.style={fill=white},baseline=(A.base),scale=0.8] 
\node (O) at (-2.5,0) {$\ker(\ell^\Q)$};
\node (O') at (-2.5,-2) {$\L\ker(\ell^\Q)=1$};
\node (A) at (0,0) {$\T'$};
\node (B) at (2.5,0) {$\LT$};
\node (A') at (0,-2) {$\Y$};
\path[-stealth,font=\scriptsize]
(O) edge (A)
(O) edge (O')
(O') edge node[above] {$ g'$} (A')
(A) edge node[above] {$\pi_{\LT}$} (B)
(A) edge node[descr] {$ g $} (A');
\path[dashed,-stealth,font=\scriptsize]
(B) edge node[right] {$ \tilde{g} $} (A');
\end{tikzpicture} 
\]
By the universal property of the localization there exists $g' \colon 1 \to \Y$ that makes the left square commute. Hence, by the universal property of the cokernel there exists a unique $\tilde{g} \colon \LT \to \Y$ such that the triangle commutes and we  conclude that $\pi_{\LT}$ is an $\L$-equivalence.
\end{proof}

However, localization functors of crossed modules do not behave like localization functors of groups. As explained above, in the category of groups (but also of topological spaces), the kernels of the localization morphisms are $\L$-acyclic 
if and only if $\L$ is a nullification functor \cite{FS}. In the context of crossed modules, we do not have such a characterization of nullification functors.

\begin{remark}
\label{nullificationnonacyclic}
We know by \cite[Proposition 4.6]{MSS} that there are nullification functors, for example $\P_{\sf X\mathbb{Z}}$ defined in \cref{example_PZ}, such that the kernels of their localization morphisms are not acyclic in general. Still, in the next proposition, we prove that if the kernel of the localization morphism is $\L$-acyclic, as in \cref{prop:acyclicimpliesadmissible}, then the localization functor is a nullification.

\end{remark}

 The cardinal in the next proof is chosen exactly as in Bousfield's \cite[Theorem~4.4]{MR1481817} for spaces.

\begin{prop}
\label{prop:acyclicimpliesnullification}
    Let $f \colon \B \to \C$ be a morphism of crossed modules and $\L_f \colon \sf XMod \to XMod$ be a regular-epi localization functor. If the kernels of the localization morphisms are $\L_f$-acyclic, then $\L_f$ is a nullifcation functor. 
\end{prop}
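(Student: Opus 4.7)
The strategy is to exhibit a specific crossed module $\A$ for which $\L_f = \P_\A$, constructed as a coproduct of sufficiently many small $\L_f$-acyclic crossed modules. This follows the template of Bousfield's Theorem~4.4 in \cite{MR1481817} for spaces, adapted to the algebraic setting of $\sf XMod$.

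First, fix a regular cardinal $\kappa$ strictly larger than the cardinalities of $B_1,B_2,C_1,C_2$, chosen so that the small object argument producing $\L_f$ can be carried out using crossed modules of cardinality less than $\kappa$. Let $\mathcal{S}$ be a set of representatives of the isomorphism classes of $\L_f$-acyclic crossed modules of cardinality less than $\kappa$, and define $\A := \coprod_{\K \in \mathcal{S}} \K$. Since a coproduct of $\L_f$-equivalences is an $\L_f$-equivalence by \cref{lemma_colimitandequivalences}, and each summand is $\L_f$-acyclic, the crossed module $\A$ is itself $\L_f$-acyclic.

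The heart of the argument is to show that the full subcategory of $\A$-null objects coincides with the full subcategory of $\L_f$-local objects. Since both $\P_\A$ and $\L_f$ are reflections onto these subcategories, the agreement of the local classes yields a natural isomorphism $\L_f \cong \P_\A$, which is precisely what ``$\L_f$ is a nullification functor'' means. The direction ``$\L_f$-local implies $\A$-null'' is immediate: any morphism $\K \to \T$ with $\T$ local and $\K \in \mathcal{S}$ factors through $\L_f \K = 1$, so every morphism $\A \to \T$ is trivial. For the converse, suppose $\T$ is $\A$-null. Its coaugmentation $\ell^\T$ is a regular epimorphism by the regular-epi assumption, so it suffices to prove that $\ker(\ell^\T) = 1$: in the semi-abelian category $\sf XMod$, a regular epimorphism with trivial kernel is an isomorphism.

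The main obstacle is the smallness argument needed to force $\ker(\ell^\T) = 1$. By hypothesis $\K := \ker(\ell^\T)$ is $\L_f$-acyclic, and the goal is to produce, from any non-trivial element of $\K$, a non-trivial $\L_f$-acyclic sub-crossed-module of $\K$ of cardinality less than $\kappa$; such a sub-object would inject non-trivially into $\T$ along $\K \hookrightarrow \T$, contradicting $\A$-nullity. The technical input comes from the inductive construction of $\L_f$: because $\L_f$ is generated by the single morphism $f$ of cardinality less than $\kappa$, every $\L_f$-acyclic crossed module can be written, up to $\L_f$-equivalence, as a $\kappa$-filtered colimit of $\L_f$-acyclic crossed modules of cardinality less than $\kappa$. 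Applying this to $\K$ and using that each approximant maps trivially to $\T$ by $\A$-nullity, one concludes that the inclusion $\K \hookrightarrow \T$ is itself trivial, whence $\K = 1$ and $\L_f \cong \P_\A$ as desired.
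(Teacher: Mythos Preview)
Your strategy coincides with the paper's: set $\A$ to be a coproduct of small $\L_f$-acyclic crossed modules governed by Bousfield's cardinality bound, observe that $\L_f$-local objects are $\A$-null, and for the converse exploit the hypothesis that $\K=\ker(\ell^\T)$ is $\L_f$-acyclic. The core technical input --- that any $\L_f$-acyclic crossed module is $\P_\A$-acyclic for this $\A$, i.e.\ Bousfield's bounded-acyclicity argument --- is invoked rather than proved in both your proof and the paper's (the paper simply says ``by design'' and cites \cite[Theorem~4.4]{MR1481817}).

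The packaging differs. Having established that $\K$ is $\P_\A$-acyclic, the paper applies \cref{thmfiberwiselocalization} to build the fiberwise $\P_\A$-nullification of $\K\to\T\to\L_f\T$, deduces that $\ell^\T$ is itself a $\P_\A$-equivalence, and then obtains the inverse $\varphi^\T\colon\L_f\T\to\P_\A\T$ by universal property. You instead compare the classes of local objects directly: for $\A$-null $\T$ you want $\K=1$, using that every small $\L_f$-acyclic subcrossed module of $\K$ maps trivially into $\T$. This is marginally more economical since it sidesteps the fiberwise machinery.

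There is, however, a slip that breaks your last paragraph as written. You assert that every $\L_f$-acyclic crossed module is, \emph{up to $\L_f$-equivalence}, a $\kappa$-filtered colimit of small $\L_f$-acyclic crossed modules, and then speak of ``each approximant'' mapping into $\T$. If $\K$ is only $\L_f$-equivalent to such a colimit there are no maps from the approximants $\K_\alpha$ to $\K$, hence none to $\T$, and the conclusion that $\K\hookrightarrow\T$ is trivial does not follow. What Bousfield's argument actually supplies is that $\K$ is literally the directed union of its $\L_f$-acyclic subcrossed modules of size below the bound; with that statement in hand your argument works (each $\K_\alpha\hookrightarrow\K\hookrightarrow\T$ is trivial by $\A$-nullity, so the monomorphism $\K\hookrightarrow\T$ is trivial and $\K=1$). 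Drop the qualifier and state the honest version.
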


\begin{proof}
Our strategy is to construct a crossed module $\A$ such that we can compare the functor $\L_f$ with the nullification functor $\P_\A$ (\cref{null}) via a natural transformation $\psi$. 
We choose $\kappa$ to be the first infinite ordinal greater than the number of chosen generators of $\B$ and $\C$, i.e., generators of the groups $B_1$, $B_2$, $C_1$ and $C_2$.
 We construct the crossed module $\A := \coprod \A_{\alpha} $, where $\A_{\alpha}$ are all the $\L_f$-acyclic crossed modules with less than $2^\kappa$ generators, see \cite[Theorem 4.4]{MR1481817}. 

The first step of this proof is to show that if a crossed module $\X$ is $\L_f$-local then it is $\A$-local. Let $\phi $ be a morphism in $Hom(\A,\X)$ and construct by naturality the following commutative diagram
\[
\begin{tikzpicture}[descr/.style={fill=white},scale = 0.8]
\node (A) at (0,0) {$1=\L_f\A$};
\node (B) at (0,2) {$\A$};
\node (C) at (3,2) {$\X$};
\node (D) at (3,0) {$\L_f\X$};
  \path[-stealth,font=\scriptsize]
  (A.east) edge node[below] {$\L_f\phi$} (D.west) 
 (C.south) edge node[right] {$\cong$} (D.north) 
 (B.south) edge node[right] {$ $} (A.north) 
 (B.east)  edge node[above] {$\phi$} (C.west);
\end{tikzpicture}
\]
By hypothesis, we have an isomorphism between $\X$ and $\L_f\X$ and by construction of $\A$, we obtain $\L_f\A = 1$. Therefore, $\phi$ factors through the zero object and hence $Hom(\A,\X) = 1$,
which is equivalent to say that $\X$ is $\A$-local. Now consider the $\P_\A$-equivalence $p^\T \colon \T \to \P_\A \T$ and the $\L_f$-local object $\L_f\T$. By the above observation, we have that $\L_f\T$ is $\A$-local and by the universal property we have the desired morphism $\psi_\T$
\[
\begin{tikzpicture}[descr/.style={fill=white},baseline=(A.base),
scale=0.8] 
\node (A) at (0,0) {$\T$};
\node (B) at (2.5,0) {$\P_\A\T$};
\node (A') at (2.5,-2) {$\L_f\T$};
\path[-stealth,font=\scriptsize]
(A) edge node[above] {$p^\T$} (B)
(A) edge node[descr] {$ \ell^\T $} (A');
\path[dashed,-stealth,font=\scriptsize]
(B) edge node[right] {$ \psi_\T $} (A');
\end{tikzpicture} 
\]
We construct next the fiberwise $\A$-nullification of the following exact sequence 
\[
\begin{tikzpicture}[descr/.style={fill=white},baseline=(A.base),scale=0.8] 
\node (A) at (0,0) {$\T$};
\node (B) at (2.5,0) {$\L_f\T$};
\node (C) at (-2.5,0) {$\ker(\ell^\T)$};
\node (O1) at (-4.5,0) {$1$};
\node (O2) at (4.5,0) {$1$};
\path[-stealth,font=\scriptsize]
(O1) edge node[above] {$ $} (C)
(B) edge node[above] {$ $} (O2)
(C.east) edge node[above] {$ $} (A.west)
(A) edge node[above] {$\ell^\T$} (B);
\end{tikzpicture} 
\] 
By assumption $\ker(\ell^\T)$ is $\L_f$-acyclic, hence also $\P_\A$-acyclic by design. This implies that $\ker\Big(p^\T \colon \ker(\ell^\T) \to \mathsf{P_A}\ker(\ell^\T) \Big) $ is equal to  $\ker(\ell^\T)$. Hence, the exact sequence satisfies condition \eqref{Conditionnormalite} of \cref{thmfiberwiselocalization} and we obtain the following fiberwise nullification
\[ 
\begin{tikzpicture}[descr/.style={fill=white},baseline=(A.base),scale=0.8] 
\node (A) at (0,0) {$\T$};
\node (B) at (2.5,0) {$\L_f\T$};
\node (C) at (-2.5,0) {$\ker(\ell^\T)$};
\node (A') at (0,-2) {$\T/\ker(\ell^\T)$};
\node (B') at (2.5,-2) {$\L_f\T$};
\node (C') at (-2.5,-2) {$1$};
\node (O1) at (-4.5,0) {$1$};
\node (O1') at (-4.5,-2) {$1$};
\node (O2) at (4.5,0) {$1$};
\node (O2') at (4.5,-2) {$1$};
\draw[commutative diagrams/.cd, ,font=\scriptsize]
(B) edge[commutative diagrams/equal] (B');
\path[-stealth,font=\scriptsize]
(O1) edge node[above] {$ $} (C)
(B) edge node[above] {$ $} (O2)
(B') edge node[above] {$ $} (O2')
(O1') edge node[above] {$ $} (C')
(C.south) edge node[right] {$ p^\T $}  (C'.north)
(A.south) edge node[left] {$ f $} (A'.north)
(C'.east) edge node[above] {$ $} (A'.west)
(A') edge node [above] {$ \cong $} (B') ([yshift=2pt]A'.east)
(C.east) edge node[above] {$ $} (A.west)
(A) edge node[above] {$\ell^\T$} (B);
\end{tikzpicture} 
\]
Since $f$ is a $\sf P_A$-equivalence, so is  $\ell^\T$. Hence, we obtain a morphism $\varphi^\T$ in the following commutative diagram:
\[
\begin{tikzpicture}[descr/.style={fill=white},baseline=(A.base),
scale=0.9] 
\node (A) at (0,0) {$\T$};
\node (B) at (2.5,0) {$\L_f\T$};
\node (A') at (2.5,-2) {$\P_\A\T$};
\draw[-stealth,font=\scriptsize,>=latex] (A') to[bend left=15]node[left] {$ \; \psi^\T $} (B);
\path[-stealth,font=\scriptsize]
(A) edge node[above] {$\ell^\T$} (B)
(A) edge node[descr] {$ p^\T $} (A');
\draw[dashed,-stealth,font=\scriptsize,>=latex] (B) to[bend left=15]node[right] {$ \; \varphi^\T $} (A');
\end{tikzpicture} 
\]
By universal property, we can conclude that the two compositions of $\psi^\T$ and $\varphi^\T$ are isomorphic to identities so that $\L_f\T \cong \P_\A\T$.
A similar argument shows the naturality of $\psi$ and $\varphi$ and therefore $\L_f$ is a nullification functor, namely $\P_\A$.
\end{proof}


\section{Nullification functors and admissibility}
\label{sec:nullification and admissibility}

 In the category of groups, the fact that kernels of localization morphisms are $\L$-acyclic was fundamental to prove that nullification functors are admissible for the class of regular epimorphisms.
  This fact is not true in general for nullification functors in the category of crossed modules as shown in \cite[Proposition~4.6]{MSS}, it is thus natural to ask whether nullification functors are admissible. We provide an affirmative answer in this final section, but let us first prove that our counter-example $\P_{\X\mathbb{Z}}$ is admissible.

\begin{prop}
    The nullification functor $\P_{\X\mathbb{Z}}$ is admissible for the class of regular epimorphisms.
\end{prop}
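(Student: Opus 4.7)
The plan is to verify admissibility directly from the explicit formula for $\P_{\X\mathbb{Z}}$ given in Example~\ref{example_PZ}. Since by Remark~\ref{nullificationnonacyclic} the kernels of the localization morphisms of $\P_{\X\mathbb{Z}}$ are not $\P_{\X\mathbb{Z}}$-acyclic, Proposition~\ref{prop:acyclicimpliesadmissible} does not apply, and we must work directly with the pullback condition.

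I first identify the $\P_{\X\mathbb{Z}}$-local objects: these are the crossed modules of the form $(A, 1, 1)$ where $A$ is an abelian group, abelianness being forced by the Peiffer identity ${}^{\partial(t)}s = tst^{-1}$ once the second level is trivial. Consider now a pullback square as in the definition of admissibility, with $\alpha \colon \LT \to \LQ$ a regular epimorphism between local objects. Write $\LT = (A, 1, 1)$ and $\LQ = (B, 1, 1)$, so that $\alpha$ corresponds to a surjection $A \twoheadrightarrow B$ of abelian groups, while $\ell^{\Q}$ becomes the map $(Q_1, Q_2, \partial^\Q) \to (Q_1/[Q_2, Q_1], 1, 1)$, which in particular identifies $B$ with $Q_1/[Q_2, Q_1]$. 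Computing the pullback componentwise yields $T'_2 = Q_2$ and $T'_1 = A \times_B Q_1$, with action ${}^{q_2}(a, q_1) = (a, {}^{q_2}q_1)$ and connecting morphism $\partial^{\T'}(a, q_1) = \partial^\Q(q_1)$.

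The crux of the argument is then to show that the projection $\pi_{\LT} \colon \T' \to \LT$ is a $\P_{\X\mathbb{Z}}$-equivalence, which by Example~\ref{example_PZ} reduces to showing that the first-coordinate projection induces an isomorphism $T'_1/[T'_2, T'_1] \cong A$. The normal subgroup $[T'_2, T'_1]$ is generated by elements of the form ${}^{q_2}(a, q_1)(a, q_1)^{-1} = (1, {}^{q_2}q_1 q_1^{-1})$ ranging over all $(a, q_1) \in T'_1$. Surjectivity of $\alpha$ allows $q_1$ to range over all of $Q_1$, and taking the normal closure inside $T'_1$ produces precisely the subgroup $\{(1, y) : y \in [Q_2, Q_1]\}$. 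The induced map on the quotient is then surjective because $\alpha$ is (lift any $a \in A$ through some preimage $q_1 \in Q_1$ of $\alpha(a)$) and injective because $a = 1$ forces $[q_1] = 0$ in $B$, so $q_1 \in [Q_2, Q_1]$ and $(1, q_1)$ vanishes in the quotient.

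The main obstacle is the careful identification of $[T'_2, T'_1]$ inside the fibered product $A \times_B Q_1$: one must check that conjugation of the commutator generators in $T'_1$ does not introduce any new first coordinates, so that the normal closure stays confined to the second coordinate. Beyond this bookkeeping, the proof is a direct unpacking of the explicit formula for $\P_{\X\mathbb{Z}}$, and provides a concrete warm-up for the more intricate inductive argument needed to handle a general nullification $\P_\A$ in Theorem~\ref{thm:PAadmissible}.
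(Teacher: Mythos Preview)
Your proof is correct but takes a genuinely different route from the paper's. The paper invokes a general criterion: by \cite[Theorem~5.1]{GranScherer}, a localization functor is admissible whenever its reflective subcategory of local objects is a Birkhoff subcategory (closed under subobjects and regular quotients). Since the $\P_{\X\mathbb{Z}}$-local objects are precisely the crossed modules of the form $(A,1,1)$ with $A$ abelian, closure under subobjects and regular quotients is immediate, and the result follows in two lines.

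You instead verify admissibility directly from the formula in \cref{example_PZ}, computing the pullback $\T'$ explicitly, identifying $[T'_2,T'_1]$ with $\{1\}\times[Q_2,Q_1]$, and showing that the projection $T'_1/[T'_2,T'_1]\to A$ is an isomorphism. This is longer but self-contained: it does not appeal to the external Birkhoff criterion, and it makes the mechanism of admissibility visible at the level of elements. Your computation is essentially the one the authors themselves sketch in a commented-out passage following their proof, so they evidently considered both approaches and opted for the shorter one. The explicit route you take also serves the purpose you mention, namely as a warm-up for the transfinite argument in \cref{thm:PAadmissible}, where no such clean subcategory criterion is available.
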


\begin{proof}
Theorem 5.1 in \cite{GranScherer} implies that $\P_{\X\mathbb{Z}}$ is admissible provided that the reflective category of  $\P_{\X\mathbb{Z}}$-local objects is a Birkhoff subcategory, i.e., it is closed under regular quotients and subobjects. Here $\P_{\X\mathbb{Z}}$-local objects are crossed modules of the form $A \to 1$ where $A$ is any abelian group and the connecting homomorphism is the trivial homomorphism. Therefore it is clearly closed under subobjects. Moreover, the quotient of $A \to 1$ by a normal subcrossed modules $N\to 1$ is the crossed module $A/N \to 1$ that is $\P_{\X\mathbb{Z}}$-local.
\end{proof}

The remaining part of the section is devoted to the proof that \emph{all} nullification functors are admissible for the class of regular epmorphisms. Consider a nullification functor $\sf P_A$ where $\A = (A_1, A_2, \partial) $ is a crossed module. To show the admissibility, it is enough to prove that the pullback of an exact sequence of $\P_\A$-local crossed modules along the coaugmentation map is $\sf P_A$-flat, in other words that the map $f$ in the following commutative diagram of crossed modules is a $\sf P_A$-equivalence
\[
\begin{tikzpicture}[descr/.style={fill=white},baseline=(A.base),scale=0.8] 
\node (A) at (0,0) {$\W$};
\node (B) at (2.5,0) {$\Q$};
\node (C) at (-2.5,0) {$\PAN$};
\node (A') at (0,-2) {$\PAT$};
\node (B') at (2.5,-2) {$\PAQ$};
\node (C') at (-2.5,-2) {$\PAN$};
\node (O1) at (-4.5,0) {$1$};
\node (O1') at (-4.5,-2) {$1$};
\node (O2) at (4.5,0) {$1$};
\node (O2') at (4.5,-2) {$1$};
\node (X) at (1.25,-1) {$(1)$};
\draw[commutative diagrams/.cd, ,font=\scriptsize]
(C) edge[commutative diagrams/equal] (C');
\path[-stealth,font=\scriptsize]
(O1) edge node[above] {$ $} (C)
(B) edge node[above] {$ $} (O2)
(B') edge node[above] {$ $} (O2')
(O1') edge node[above] {$ $} (C')
(B.south) edge node[right] {$ p^\Q $}  (B'.north)
(A.south) edge node[left] {$ f $} (A'.north)
(C'.east) edge node[above] {$ $} (A'.west)
(A') edge node [below] {$h$} (B') ([yshift=2pt]A'.east)
(C.east) edge node[above] {$ $} (A.west)
(A) edge node[above] {$g$} (B);
\end{tikzpicture} \] where $(1)$ is a pullback and $g$ and $h$ are regular epimorphisms. 
 To do so we follow step by step the inductive construction of $\sf P_A Q = colim Q_\beta$ as presented in \cite[Proposition~2.8]{MSS}, see also \cref{propnullify}. For each successor ordinal $\beta+1$ we obtain $\sf Q_{\beta +1}$ from $\sf Q_\beta$ by killing all morphisms out of $\sf A$ so let us start with the construction of $\sf Q_1$ from $\sf Q_0 = Q$.


 \begin{remark} \label{rem:ideabehindnextproof}
 Let $\varphi : \A \rightarrow \Q$ be a morphism of crossed modules. The crossed module $\Q_1$ is the quotient of $\Q$ by the normal closure $\K_\Q$ in $\Q$ of the image of
 \[
 ev\colon \coprod_{\varphi \in Hom(\A,\Q)}\A = \M \longrightarrow \Q
 \]
 which is defined by $\varphi$ on the copy of $\A$ indexed by $\varphi$. The idea behind the construction we perform next is that we do not need to kill all morphisms from $\A$ to the extension $\W$ in order to construct its nullification $\sf P_A \W$, it is sufficient to take care of those factoring through $\Q$. Beware that given an extension $\N \rightarrow \T \rightarrow \Q$ with $\N$ an $\A$-acyclic crossed module, it is not true in general that all morphisms from $\A$ to $\T$ factor through $\Q$.

By definition of $p^\Q$ we have the following equality for the composition $p^\Q \circ \varphi = 1 = h \circ 1$ as below. Therefore, any morphism from $\A$ to $\Q$ induces one from $\A$ to $\W$: 
 \begin{equation}\label{pullbackphipsi}
 \begin{tikzpicture}[baseline=(B.base),descr/.style={fill=white},scale=0.8]
\node (A) at (0,0) {$\PAT$};
\node (B) at (0,2) {$\W$};
\node (C) at (2,2) {$\Q$};
\node (D) at (2,0) {$\PAQ$};
\node (E) at (-1.75,3.75) {$A$};
  \path[-stealth,font=\scriptsize]
  (A.east) edge node[below] {$h$} (D.west) 
 (C.south) edge node[right] {$p^\Q$} (D.north) 
 (B.south) edge node[right] {$f$} (A.north) 
 (B.east)  edge node[above] {$g$} (C.west);
\draw[-stealth,font=\scriptsize,>=latex] (E.south) to[bend right=15]node[left] {$1$} (A.west);
\draw[-stealth,font=\scriptsize,>=latex] (E.east) to[bend left=15]node[right] {$ \; \varphi $} (C.north);
 \path[-stealth,font=\scriptsize,dashed,font=\scriptsize]
 (E.south east)  edge node[descr] {$\exists ! \psi $} (B.north);
\end{tikzpicture}
 \end{equation}
 We call $\psi$ the morphism determined by $\varphi$ and it makes sense now to consider $\K_\W$, the normal closure in $\W$ of the image of $\M \rightarrow \W$.
\end{remark}
 
 \begin{lemma} \label{isokernel}
 With the same notation as in \cref{rem:ideabehindnextproof}, we have an isomorphism $\K_\W \cong \K_\Q$.
 \end{lemma}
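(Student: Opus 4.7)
The plan is to show that the restriction of $g$ to $\K_\W$ is an isomorphism onto $\K_\Q$. The key structural ingredient is that in the pullback square of \eqref{pullbackphipsi}, the morphism $g$ restricts to an isomorphism $\ker(f) \xrightarrow{\cong} \ker(p^\Q)$, a standard consequence of pullbacks being computed componentwise in $\sf XMod$.

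First, I would establish the inclusions $\K_\W \subseteq \ker(f)$ and $\K_\Q \subseteq \ker(p^\Q)$. By the defining property of $\psi$ in \eqref{pullbackphipsi}, each generating morphism $\psi \colon \A \to \W$ satisfies $f \circ \psi = 1$, so the image of $\M \to \W$ is contained in $\ker(f)$. Since $\ker(f)$ is a normal subcrossed module of $\W$ and $\K_\W$ is by definition the smallest normal subcrossed module containing that image, we get $\K_\W \subseteq \ker(f)$. The same argument, using $p^\Q \circ \varphi = 1$, gives $\K_\Q \subseteq \ker(p^\Q)$.

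Next, I would check that $g$ maps $\K_\W$ into $\K_\Q$ and does so surjectively. For the inclusion $g(\K_\W) \subseteq \K_\Q$, note that $g \circ \psi = \varphi$, so $g$ sends the generators $\psi(a)$ of $\K_\W$ to the corresponding generators $\varphi(a)$ of $\K_\Q$; since $g$ is a homomorphism compatible with the respective actions, it carries the normal closure into the normal closure. For surjectivity, $g$ is a regular epimorphism as the pullback of the regular epimorphism $p^\Q$, hence componentwise surjective by \cref{rem:eprireg}. A typical generator of $(\K_\Q)_1$ has the form ${}^{q_2}\varphi_1(a_1)$ with $q_2 \in Q_2$; lifting $q_2$ to some $w_2 \in W_2$ yields an element ${}^{w_2}\psi_1(a_1)$ of $\K_\W$ mapping to it. The level-$2$ generators and the Peiffer-type commutators $[(\K_\Q)_2, Q_1]$ are handled by lifting the relevant elements of $Q_2$ and $Q_1$ to $\W$ in the same manner.

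Finally, for injectivity of $g|_{\K_\W}$, I combine the previous observations: since $\K_\W \subseteq \ker(f)$ and $g$ restricts to an isomorphism $\ker(f) \xrightarrow{\cong} \ker(p^\Q)$, the restriction $g|_{\K_\W}$ is injective. Together with surjectivity onto $\K_\Q$, this yields the desired isomorphism $\K_\W \cong \K_\Q$. I expect the only delicate point to be the bookkeeping of generators of $\K_\Q$ in the crossed-module sense of \cref{defnormalcrossed}, checking that each of the three types of generator admits a lift to $\K_\W$; all such lifts exist by componentwise surjectivity of $g$ together with the identity $g \circ \psi = \varphi$.
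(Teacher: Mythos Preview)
Your argument is correct and takes a somewhat more conceptual route than the paper. The paper works entirely with the explicit levelwise description of $\W$ as compatible pairs $(x,q)$: since $\psi(a)=(1,\varphi(a))$, one computes the normal closure $\K_\W$ directly from the crossed-module normal-closure formula and obtains $\K_\W = 1 \times \K_\Q$ inside $\W$, giving the isomorphism on the nose. Your approach instead packages the same information via the pullback isomorphism $\ker(f)\cong\ker(p^\Q)$ together with minimality of normal closures; this avoids writing out the explicit formula but trades it for the generator-lifting bookkeeping you flag at the end. Both arguments rest on the same two facts, namely $g\circ\psi=\varphi$ with $f\circ\psi=1$ and the surjectivity of $g$; the paper's computation is essentially the unpacked form of your lifting step. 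One small slip: $g$ is the pullback of $h$, not of $p^\Q$ (check which arrows are parallel in \eqref{pullbackphipsi}); since $h$ is also a regular epimorphism this does not affect your conclusion. You could also shorten the surjectivity step by observing directly that the image under a regular epimorphism of a normal closure equals the normal closure of the image, which yields $g(\K_\W)=\K_\Q$ in one stroke and spares you the case-by-case lifting.
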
 

\begin{proof}
 Limits are computed levelwise for crossed modules, so the pullback $\W$ consists of compatible pairs $(x, q)$ for $x \in (\sf P_A T)_i$ and $q \in \sf Q_i$ for $i = 1, 2$. By construction of $\psi$ we have $\psi(a) = (1, \varphi(a))$.


Now, we compute the kernels of the cokernels of $ev \colon \M \to \Q$ and $(1,ev) \colon \M \to \W$.
We have the two following descriptions of the kernels.
\[ \K_\Q  = \Big( ev_1(M_1)_{Q_2} [ev_2(M_2)_{Q_2}, Q_1 ],\; ev_2(M_2)_{Q_2}, \partial \Big) \]
\[ \K_\W =  \Big( (1,ev_1)(M_1)_{W_2} [(1,ev_2)(M_2)_{W_2}, W_1 ] , \; (1,ev_2)(M_2)_{W_2}, \partial' \Big) \]

The second group of the crossed module $\K_\W $ is the easier one:
\begin{align*}
    (1,ev_2)(M_2)_{W_2} &= \{ ^{(t_2,q_2)}(1, ev_2(m_2)) \mid (t_2,q_2) \in W_2, \; m_2 \in M_2 \}\\
    &= \{ (1,\;^{q_2} ev_2(m_2)) \mid q_2 \in Q_2, \; m_2 \in M_2 \} \\
    &= 1 \times ev_2(M_2)_{Q_2}.
\end{align*}
where the second equality holds since $h$ is surjective.
Via similar computations, we see that $ (1,ev_1)(M_1)_{W_1} = 1 \times ev_1(M_1)_{Q_1}$, so we are left with proving that 
\[ 
[(1,ev_2)(M_2)_{W_2}, W_1 ] = 1 \times [ev_2(M_2)_{Q_2}, Q_1 ] 
\]
This we do via the following equalities:
\begin{align*}
    [(1,ev_2)(M_2)_{_2}, W_1 ] &=  [(1 \times ev_2(M_2)_{Q_2}), W_1 ] \\
    &= \{ ^{(1,x_2)}(t_1,q_1)(t_1,q_1)^{-1} \mid x_2  \in ev_2(M_2)_{Q_2}, \; (t_1,q_1) \in W_1\}\\
    &= \{ (1,^{x_2}q_1q_1^{-1} \mid x_2  \in ev_2(M_2)_{Q_2}, \; q_1 \in Q_1\}\\
    &= 1 \times [(ev_2(M_2)_{Q_2}, Q_1 ]
\end{align*}
So finally we can conclude that $K_\W = 1 \times K_\Q$, in particular $K_\W$ and $K_\Q$ are isomorphic.
\end{proof}

\begin{prop}
\label{prop:induction}
For any ordinal $\beta$, we have a commutative diagram 
 \[ 
 \begin{tikzpicture}[rotate=90,descr/.style={fill=white},baseline=(A.base), scale=1]
\node (A) at (2,0) {$\PAT$};
\node (B) at (2,2) {$\W_\beta$};
\node (C) at (0,2) {$\Q_\beta$};
\node (D) at (0,0) {$\PAQ$};
\node (E) at (2,4) {$\W$};
\node (F) at (0,4) {$\Q$};
\node (X) at (1,1) {$(2)$};
  \path[-stealth,font=\scriptsize]
 (C) edge node[above] {$ $} (D) 
 (B) edge node[above] {$ $} (A) 
 (B)  edge node[left] {$ h_{\beta}$} (C)
 (E)  edge node[left] {$ g $} (F)
 (E) edge node[above] {$ f_{\beta} $} (B)
 (F) edge node[above] {$ p^\Q_{\beta} $} (C) 
 (A)  edge node[right] {$h $} (D);
\end{tikzpicture}
\] 
where $(2)$ is a pullback square, the maps $f_{\beta} \colon \W \to \W_\beta $ and $p^\Q_{\beta} \colon \Q \to \Q_\beta$ are $\P_A$-equivalences, and $h_{\beta}$ is a regular epimorphism.
\end{prop}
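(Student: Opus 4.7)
The plan is to proceed by transfinite induction on $\beta$, following exactly the inductive construction of $\P_\A \Q$ recalled after \cref{propnullify} and using \cref{isokernel} at each successor stage.

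For the base case $\beta = 0$, set $\W_0 = \W$, $\Q_0 = \Q$, $f_0 = \mathrm{Id}_\W$, $p^\Q_0 = \mathrm{Id}_\Q$, and $h_0 = g$, so that the square $(2)$ is precisely the pullback square $(1)$ from the beginning of the discussion; all the claimed properties hold by assumption.

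For a successor ordinal $\beta + 1$, the inductive hypothesis gives a pullback square with $h_\beta$ a regular epimorphism, so we can apply the argument of \cref{rem:ideabehindnextproof} and \cref{isokernel} to the square at level $\beta$: any morphism $\varphi \colon \A \to \Q_\beta$ induces a morphism $\psi \colon \A \to \W_\beta$ by the universal property of the pullback (using that the composition $\A \to \Q_\beta \to \PAQ$ is trivial, since it factors through $\P_\A \A = 1$), and the normal closure $\K_{\W_\beta}$ of the image of the evaluation map $\coprod \A \to \W_\beta$ is isomorphic to $\K_{\Q_\beta} \subseteq \Q_\beta$, sitting inside $\W_\beta$ as $1 \times \K_{\Q_\beta}$. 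Define $\W_{\beta+1} := \W_\beta / \K_{\W_\beta}$ and $f_{\beta+1}$ as the composite $\W \to \W_\beta \to \W_{\beta+1}$; this is a regular epimorphism and a $\P_\A$-equivalence, being the composition of such maps (the quotient step is a pushout of the $\P_\A$-equivalence $\coprod \A \to 1$, cf.\ \cref{lemma_colimitandequivalences}). The induced morphism $h_{\beta+1} \colon \W_{\beta+1} \to \Q_{\beta+1}$ exists by the universal property of the quotient and is a regular epimorphism by \cref{usefulproposemiab}(2). To finish the successor case, we must check that the new square is still a pullback: using \cref{usefulproposemiab}(1), it suffices to verify that the kernel of $h_{\beta+1}$ agrees with the kernel of $h$, and this follows from the isomorphism $\K_{\W_\beta} \cong 1 \times \K_{\Q_\beta}$, because quotienting $\W_\beta$ by $1 \times \K_{\Q_\beta}$ preserves the first component (which is exactly the kernel of $h$) while killing exactly the part of the second component that gets killed in $\Q_{\beta+1}$.

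For a limit ordinal $\beta$, set $\W_\beta := \mathrm{colim}_{\alpha < \beta} \W_\alpha$, $\Q_\beta := \mathrm{colim}_{\alpha < \beta} \Q_\alpha$, and take $f_\beta$, $p^\Q_\beta$, $h_\beta$ as the induced morphisms. The maps $f_\beta$ and $p^\Q_\beta$ are $\P_\A$-equivalences by \cref{lemma_colimitandequivalences}(3), and $h_\beta$ is a regular epimorphism because regular epimorphisms between crossed modules are componentwise surjective group homomorphisms (\cref{rem:eprireg}) and surjectivity is preserved under filtered colimits of groups. The main obstacle I expect is to verify that pullback squares in $\sf XMod$ are preserved by the filtered colimit at limit stages; this follows componentwise from the corresponding fact in $\sf Grp$, because limits and filtered colimits in $\sf XMod$ are computed levelwise and filtered colimits in $\sf Grp$ commute with finite limits. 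This completes the induction.
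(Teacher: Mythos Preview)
Your proof is correct and follows the same transfinite induction as the paper, hinging on \cref{isokernel} at successor stages and on the commutation of filtered colimits with finite limits at limit stages. The only technical differences are in how you certify the pullback property: in the successor step you check directly that $\ker(h_{\beta+1}) \cong \PAN$ and invoke \cref{usefulproposemiab}(1), whereas the paper first shows the intermediate square $(\W_\beta,\Q_\beta,\W_{\beta+1},\Q_{\beta+1})$ is a pullback and then uses a pasting argument (\cite[Proposition~4.1.4]{BB}) together with the regular epimorphism $h_{\beta+1}$; at the limit stage you argue that filtered colimits of pullback squares are pullbacks, while the paper again goes through a kernel comparison followed by pasting. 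Both routes are valid and close in spirit; yours is slightly more direct.
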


\begin{proof}
We prove it by induction. Since the nullification uses possibly a transfinite construction we have to initialize the induction, but the case $\beta =0$ holds by assumption, and then check the statement for successor and limit ordinals.

\underline{The successor case}
Suppose that for an ordinal $\beta$ the lemma is proved. 
Then we consider the kernels $\K^\W _{\beta}$ and $\K^\Q_{\beta}$ of the cokernels of the evaluation maps $ev: \coprod_{Hom(\A,\Q_{\beta})}\A \longrightarrow \Q_{\beta}$ and $ ev: \coprod_{Hom(\A,\Q_{\beta})}\A \longrightarrow \W _{\beta} $ respectively. They fit in the following diagram of exact rows:
 \[ \begin{tikzpicture}[rotate=90,descr/.style={fill=white},baseline=(A.base)]
\node (A) at (2,0) {$\W _{\beta+1}$};
\node (B) at (2,2) {$\W _\beta$};
\node (C) at (0,2) {$\Q_\beta$};
\node (D) at (0,0) {$\Q_{\beta+1}$};
\node (E) at (2,4) {$\K^\W _\beta$};
\node (F) at (0,4) {$\K^\Q_\beta$};
\node (X) at (1,1) {$(2)$};
  \path[-stealth,font=\scriptsize]
 (C) edge node[above] {$ p^\Q_{(\beta \to \beta + 1)}$} (D) 
 (B) edge node[above] {$f_{(\beta \to \beta + 1)} $} (A) 
 (B)  edge node[right] {$h_{\beta}$} (C)
 (E)  edge node[right] {$ \cong $} (F)
 (E) edge node[above] {$i_\W$} (B)
 (F) edge node[above] {$i_\Q$} (C); \path[-stealth,font=\scriptsize,dashed,font=\scriptsize]
 (A)  edge node[right] {$\exists ! h_{\beta+1} $} (D);
\end{tikzpicture}
\] 
Lemma \ref{isokernel} applies here and gives us the isomorphism between $\K^\W _\beta$ and $\K^\Q_\beta$. The composition
\[
p^\Q_{(\beta \to \beta + 1)} \circ h_{\beta} \circ i_\W  \colon \K_\beta^\W   \rightarrow \Q_{\beta+1}
\]
is zero by commutativity, yielding by the universal property of the cokernel the morphism $h_{\beta+1} \colon \W _{\beta+1} \rightarrow \Q_{\beta+1}$. The isomorphism between the kernels implies that $(2)$ is a pullback (see \cref{usefulproposemiab}). By induction hypothesis $h_{\beta}$ is a regular epimorphism and the composition $p^\Q_{(\beta \to \beta + 1)} \circ h_{\beta} \colon \W _\beta \rightarrow \Q_{\beta+1}$ is also a regular epimorphism, hence so is $h_{\beta +1}$. We show now that $p^\Q_{(\beta \to \beta + 1)}$ and $f_{(\beta \to \beta + 1)}$ are $\P_\A$-equivalences.

For the first one we write the cokernel $\Q_{\beta+1}$ as the pushout along the evaluation morphism:
\[ 
\begin{tikzpicture}[descr/.style={fill=white},baseline=(A.base)]
\node (A) at (0,0) {$1$};
\node (B) at (0,2) {$\coprod \A$};
\node (C) at (2,2) {$\Q_\beta$};
\node (D) at (2,0) {$\Q_{\beta+1}$};
  \path[-stealth,font=\scriptsize]
 (C.south) edge node[right] {$p^\Q_{(\beta \to \beta + 1)}$} (D.north) 
 (B.south) edge node[left] {$1$} (A.north) 
 (B.east) edge node[above] {$\varphi$} (C.west) 
 (A.east) edge node[below] {$inc$} (D.west) 
 ; 
\end{tikzpicture}\] 
where the coproduct is taken over $Hom(\A,\Q)$. 
The trivial map $\A \rightarrow 1$ is a $\P_\A$-equivalence, thus so is the pushout $p^\Q_{(\beta \to \beta + 1)
} \colon \Q_\beta \to \Q_{\beta+1}$ by \cref{lemma_colimitandequivalences} (1). 
By composing with the $\P_\A$-equivalence $\Q \to \Q_\beta$ we see that $p^\Q_{\beta + 1} \colon Q \to Q_{\beta +1}$ is a $\P_\A$-equivalence as well. The same argument shows that $f_{\beta +1} : \W  \rightarrow \W _{\beta + 1} $ is also a $\P_\A$-equivalence. By the universal property of the localization, we obtain two maps, one from $\W _{\beta + 1}$ to $\PAT$ and the other from $\Q_{\beta + 1}$ to $\PAQ$ such that $(2)$ commutes:
\[ 
\begin{tikzpicture}[rotate=90,descr/.style={fill=white},baseline=(A.base),yscale=1.4]
\node (A) at (2,0) {$\PAT$};
\node (B) at (2,2) {$\W _{\beta + 1}$};
\node (C) at (0,2) {$\Q_{\beta +1}$};
\node (D) at (0,0) {$\PAQ$};
\node (E) at (2,3.6) {$\W $};
\node (F) at (0,3.6) {$\Q$};
\node (X) at (1,1) {$(2)$};
\node (Y) at (1,3) {$(1)$};
  \path[-stealth,font=\scriptsize]
 (B)  edge node[left] {$h_{\beta + 1}$} (C)
 (E)  edge node[right] {$ g $} (F)
 (E) edge node[above] {$f_{\beta + 1}$} (B)
 (F) edge node[above] {$p^\Q_{\beta + 1}$} (C)
 (A)  edge node[right] {$h$} (D);
 \draw[-stealth,font=\scriptsize,>=latex] (E) to[bend left=40]node[descr] {$ f $} (A);
 \draw[-stealth,font=\scriptsize,>=latex] (F) to[bend right=40]node[descr] {$ p^\Q $} (D);
  \path[-stealth,font=\scriptsize,dashed,font=\scriptsize]
 (B)  edge node[above] {$ $} (A);
  \path[-stealth,font=\scriptsize,dashed,font=\scriptsize]
 (C)  edge node[above] {$ $} (D);
\end{tikzpicture}
\]
Since $(1)$ and the outer rectangle are pullbacks and $h_{\beta + 1}$ is a regular epimorphism, we can conclude by Proposition 4.1.4 in \cite{BB} that $(2)$ is a pullback.\\ \\
\underline{The limit case }
To prove the statement for a general transfinite induction we need to prove it for a limit ordinal as well. Let $\gamma$ be a limit ordinal and
\[
\Q_{\gamma} = \hbox{\rm colim}_{\alpha < \gamma} \Q_{\alpha} \ \ \ \ \W _{\gamma} = \hbox{\rm colim}_{\alpha < \gamma} \W _{\alpha}
\]
We have shown that $p^\Q_{(\alpha-1 \rightarrow \alpha)} \colon \Q_{\alpha-1} \rightarrow \Q_{\alpha}$ is a $\P_\A$-equivalence for all $\alpha < \gamma$. Hence the composition $p^\Q_{\alpha} \colon \Q \rightarrow \Q_{\alpha}$ is also a $\P_\A$-equivalence and \cref{lemma_colimitandequivalences} (3), implies that $p^\Q_{\gamma} \colon \Q \rightarrow \Q_{\gamma}$ is a $\P_\A$-equivalence. The same reasoning holds for $f_{\gamma} \colon \W  \rightarrow \W _{\gamma}$. The existence of the maps $f: \W \rightarrow \PAT$ and $p^\Q : \Q \rightarrow \PAQ$ give us two maps $\W_ \gamma \rightarrow \PAT$ and $\Q_\gamma \rightarrow \PAQ$ as shown on the diagram below \eqref{diagordinallimite}.

 The nullification $\sf P_A Q$ is constructed as filtered colimit of the $\Q_\alpha$, see \cref{propnullify}. Filtered colimits commutes with finite limits, in particular with kernels. 
 Therefore
\[
\K^\Q_\gamma := \ker(\Q \rightarrow \Q_{\gamma}) \cong \hbox{\rm colim}_{\alpha < \gamma}\ker(\Q \rightarrow \Q_{\alpha})
\]
where $\ker(\Q \rightarrow \Q_\alpha)$ will be denoted $\K^\Q_\alpha$. The category $\sf XMod$ is a variety of algebras (also called algebra category of fixed type). Hence, by \cite[Proposition~IX.1.2]{MacLane}, we know that the forgetful functor $ \mathcal{U} \colon \sf XMod \rightarrow Set$ creates filtered colimits. In other words we have : 
\[ 
\mathcal{U}(\hbox{\rm colim}_{\alpha < \gamma} \K^\Q_{\alpha}) = \hbox{\rm colim}_{\alpha < \gamma} \mathcal{U} \K^\Q_\alpha = \bigcup_{\alpha < \gamma} \mathcal{U} \K^\Q_\alpha 
\] 
where the colimit in the first term lies in the category of crossed modules and the second colimit in the category of sets. This means that we know the structure of $\hbox{\rm colim}_{\alpha < \gamma}\K^\Q_\alpha$ as a set. Now since $\K^\Q_\alpha \cong \K^\W_\alpha$ for all $\alpha < \gamma$  and $\K^\Q_\gamma$ can be written as a union of $\K^\Q_\alpha$ (as well as $\K^\W_\gamma$) we conclude that $\K_\gamma^\Q \cong \K_\gamma^\W$. We consider now the diagram:

\begin{equation}\label{diagordinallimite} \begin{tikzpicture}[rotate=90,descr/.style={fill=white},baseline=(X.base),yscale=1.4]
\node (A) at (2,0) {$\PAT$};
\node (B) at (2,2) {$\W_\gamma$};
\node (C) at (0,2) {$\Q_\gamma$};
\node (D) at (0,0) {$\PAQ$};
\node (E) at (2,3.6) {$\W $};
\node (F) at (0,3.6) {$\Q$};
\node (X) at (1,1) {$(1)$};
\node (Y) at (1,3) {$(2)$};
  \path[-stealth,font=\scriptsize]
 (B)  edge node[left] {$h_{\gamma}$} (C)
 (E)  edge node[right] {$ g $} (F)
 (E) edge node[above] {$f_{\gamma}$} (B)
 (F) edge node[above] {$p^\Q_{\gamma}$} (C)
 (A)  edge node[right] {$h$} (D);
 \draw[-stealth,font=\scriptsize,>=latex] (E) to[bend left=40]node[descr] {$ f $} (A);
 \draw[-stealth,font=\scriptsize,>=latex] (F) to[bend right=40]node[descr] {$ p^\Q $} (D);
  \path[-stealth,font=\scriptsize,dashed,font=\scriptsize]
 (B)  edge node[above] {$ $} (A);
  \path[-stealth,font=\scriptsize,dashed,font=\scriptsize]
 (C)  edge node[above] {$ $} (D);
\end{tikzpicture}
\end{equation}
Since the kernels of $f_{\gamma}$ and $p^\Q_{\gamma}$ are isomorphic we deduce that $(2)$ is a pullback. As we have shown that every map $p^\Q_{(\alpha \rightarrow \alpha+1)} \colon \Q_\alpha \rightarrow \Q_{\alpha+1}$ is a regular epimorphism, the morphism $p^\Q_{\alpha} \colon \Q \rightarrow \Q_\alpha$ is also a regular epimorphism, being a composition of regular epimorphisms in a regular category. 
The colimit functor 
being a left adjoint functor, it preserves colimits and in particular cokernels. In a pointed protomodular category, any regular epimorphism is a cokernel, therefore
\[
p^\Q_{\gamma} \colon \Q \rightarrow \Q_\gamma
\]
is a regular epimorphism. 
The composition $p^\Q_{\gamma} \circ g$ is also a regular epimorphism, and we conclude that so is $h_{\gamma}$. With the same argument as for the successor step, we get that $(1)$ is a pullback, which ends the induction proof. 
\end{proof}

 We are ready now for the main result of this section.
\begin{theorem}
\label{thm:PAadmissible}
Let $\A$ be any crossed module. The nullification functor $\P_\A$ is admissible for the class of regular epimorphisms.
\end{theorem}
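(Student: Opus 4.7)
The plan is to combine the reductions from \cref{section CF and Ad} with the inductive machinery of \cref{prop:induction}. Since $\P_\A$ is a regular-epi localization, \cref{finalstep} identifies admissibility for regular epimorphisms with conditional flatness, and \cref{2ndstep} reduces this further to showing that, for any exact sequence of $\P_\A$-local objects $1 \to \PAN \to \PAT \xrightarrow{h} \PAQ \to 1$ and any morphism $g \colon \Q \to \PAQ$, the pullback $\W$ along $g$ has the property that the induced map $f \colon \W \to \PAT$ is a $\P_\A$-equivalence. This is exactly the setup of \cref{prop:induction}.

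The key step is then to apply \cref{prop:induction} at the stabilization ordinal $\lambda$ provided by \cref{propnullify}, at which point $\Q_\lambda = \PAQ$ and $p^\Q_\lambda$ coincides with $p^\Q \colon \Q \to \PAQ$. The pullback square $(2)$ of \cref{prop:induction} then has the identity $\PAQ \to \PAQ$ as one of its edges, and the universal property of the pullback forces the parallel edge $\W_\lambda \to \PAT$ to be an isomorphism. Since \cref{prop:induction} also guarantees that $f_\lambda \colon \W \to \W_\lambda$ is a $\P_\A$-equivalence, and the commutativity of the outer diagram identifies $f$ with the composite $\W \xrightarrow{f_\lambda} \W_\lambda \xrightarrow{\cong} \PAT$, we conclude that $f$ is itself a $\P_\A$-equivalence, as required.

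Applying $\P_\A$ to the top row of the pullback diagram then yields the original exact sequence of local objects, showing that the pullback is $\P_\A$-flat, and hence that $\P_\A$ is conditionally flat. The serious obstacle in this argument has in fact already been overcome in \cref{prop:induction}: the delicate point is the comparison of kernels $\K^\W_\beta \cong \K^\Q_\beta$ at every successor step, provided by \cref{isokernel}, together with the verification that this isomorphism survives at limit ordinals via the compatibility of filtered colimits with kernels in the semi-abelian category $\sf XMod$. Once the pullback structure is known to propagate through the entire transfinite construction, admissibility of $\P_\A$ drops out by specializing to the terminal stage $\lambda$.
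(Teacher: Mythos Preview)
Your core argument is correct and matches the paper's approach exactly: apply \cref{prop:induction} at the stabilization ordinal $\lambda$ from \cref{propnullify}, so that $\Q_\lambda \cong \PAQ$, deduce $\W_\lambda \cong \PAT$ from stability of isomorphisms under pullback, and conclude that $f$ is a $\P_\A$-equivalence since $f_\lambda$ is.

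There is, however, an unnecessary and slightly misleading detour in your first paragraph. You invoke \cref{finalstep} and \cref{2ndstep} to reduce admissibility to showing that the pullback of an exact sequence of local objects along \emph{any} morphism $g\colon \Q \to \PAQ$ is $\P_\A$-flat, and then assert that ``this is exactly the setup of \cref{prop:induction}''. But the setup preceding \cref{rem:ideabehindnextproof}, on which \cref{isokernel} and \cref{prop:induction} rest, is specifically the pullback along the coaugmentation $p^\Q$, not along an arbitrary $g$; the proof of \cref{isokernel} uses that the lift $\psi$ of $\varphi$ has the form $(1,\varphi)$, which depends on the right-hand vertical map being $p^\Q$. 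The paper avoids this mismatch by checking admissibility directly from its definition (pullback along $\ell^\Q = p^\Q$), without passing through conditional flatness at all. Your detour is harmless only because admissibility is what you actually want, so you never needed the stronger ``arbitrary $g$'' statement in the first place --- but as written, the reduction you announce and the case you prove do not line up. Drop the appeal to \cref{finalstep} and \cref{2ndstep} and verify admissibility directly; the rest of your argument then goes through verbatim and coincides with the paper's.
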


\begin{proof}
 Let $\W$ be the pullback of a regular epimorphism $h\colon \PAT \to \PAQ$ between $\P_\A$-local crossed modules along the localization morphism $p^\Q\colon \Q \to \PAQ$.
Let $\lambda$ be the ordinal such that $\Q_\lambda \cong \PAQ$ (see \cref{propnullify}). By Proposition~\ref{prop:induction} we have a diagram: 
\[ 
\begin{tikzpicture}[rotate=90,descr/.style={fill=white},baseline=(A.base)]
\node (A) at (2,0) {$\PAT$};
\node (B) at (2,2) {$\W_\lambda$};
\node (C) at (0,2) {$\Q_\lambda$};
\node (D) at (0,0) {$\PAQ$};
\node (E) at (2,4) {$\W$};
\node (F) at (0,4) {$\Q$};
\node (X) at (1,1) {$(2)$};
  \path[-stealth,font=\scriptsize]
 (C) edge node[above] {$ \cong $} (D) 
 (B) edge node[above] {$ $} (A) 
 (B) edge node[left] {$h_{\lambda}$} (C) 
 (E) edge node[left] {$g$} (F)
 (A) edge node[right] {$h$} (D) 
 (E) edge node[above] {$f_{\lambda}$} (B)
 (F) edge node[above] {$ p_{\lambda}^\Q$} (C) 
 ; 
\end{tikzpicture}
\]
 where the outer rectangle is a pullback, the morphisms $f_{\lambda}$ and $p^\Q_{\lambda}$ are $\P_\A$-equivalences, and $(2)$ is a pullback. Since isomorphisms are stable under pullbacks, we have an isomorphism $\W_\lambda \cong \PAT$. We have thus proved that the map $f\colon \W \to \PAT$ is a $\P_\A$-equivalence, which means that
the functor $\P_\A$ is admissible.
\end{proof}

In this article we have focused on regular-epi localization functors because they appear naturally when studying conditional flatness and admissibility in the category of groups, crossed modules, or more general semi-abelian categories.
We conclude this section by observing that the
notion of conditional flatness can also be defined for non regular-epi localization functor. The next proposition gives an example of such a localization functor which is conditionally flat. Let us stress that we will not a priori have an equivalence with admissiblity, as was the case for regular-epi localization functors by \cref{finalstep}. In the proof of the following proposition we have thus to verify the more general condition for conditional flatness, as in \cref{defcondflat}.


\begin{prop}
\label{condflatnotregularepi}
    There exists a non regular-epi localization functor which is nevertheless conditionally flat and therefore admissible for the class of regular epimorphisms.
\end{prop}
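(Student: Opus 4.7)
The candidate is the localization functor $\sf I$ defined in \cref{I}, which sends $(N_1, N_2, \partial^\N)$ to $(N_2, N_2, \id_{N_2})$. My plan is to verify three things: (i) $\sf I$ is not a regular-epi localization, (ii) every exact sequence of crossed modules is $\sf I$-flat, from which conditional flatness will follow by a short general argument, and (iii) admissibility for the class of regular epimorphisms then follows via the easy direction of \cref{finalstep}.

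For (i), the coaugmentation $\ell^\N \colon \N \to \sf I \N$ has components $\partial^\N \colon N_1 \to N_2$ and $\id_{N_2}$. By \cref{rem:eprireg} this is a regular epimorphism if and only if $\partial^\N$ is surjective, which fails, for instance, when $\N = \X\mathbb{Z} = (1, \mathbb{Z}, 1)$. So $\sf I$ is not a regular-epi localization.

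For (ii), given any exact sequence $1 \to \N \to \T \to \Q \to 1$ of crossed modules, the level-$2$ components form a short exact sequence $1 \to N_2 \to T_2 \to Q_2 \to 1$ of groups. Applying $\sf I$ replicates this exact sequence on both levels, and the connecting maps are identities, so the resulting sequence of crossed modules is again exact. Hence every exact sequence is $\sf I$-flat. Since pullbacks in $\sf XMod$ are computed component-wise, and in the semi-abelian category $\sf XMod$ regular epimorphisms are stable under pullback while kernels are preserved, the pullback of any short exact sequence is again short exact. Thus the pullback of an $\sf I$-flat exact sequence is exact, hence $\sf I$-flat, and $\sf I$ is conditionally flat.

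The main (and essentially only) obstacle is step (iii): since $\sf I$ is not a regular-epi localization, the full equivalence of \cref{finalstep} does not directly apply. However, as noted in the remark after \cref{finalstep}, the implication $(1)\Rightarrow(2)$ holds for arbitrary localization functors. To keep the argument self-contained one can verify it directly: given a regular epimorphism $\alpha\colon\LT\to\LQ$ between local objects, the short exact sequence $1 \to \ker(\alpha) \to \LT \xrightarrow{\alpha} \LQ \to 1$ is trivially $\sf I$-flat (its kernel is local, being a limit of locals). Pulling back along $\ell^\Q$ and applying conditional flatness gives an exact sequence $1 \to \ker(\alpha) \to \sf I \T' \to \LQ \to 1$, and the short five lemma in the semi-abelian category $\sf XMod$ forces the comparison map $\sf I \T' \to \LT$ to be an isomorphism, yielding admissibility.
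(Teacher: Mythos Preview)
Your proof is correct and follows essentially the same approach as the paper: both use the functor $\I$, observe that its coaugmentation fails to be a regular epimorphism whenever $\partial^\N$ is not surjective, and prove conditional flatness by showing that \emph{every} short exact sequence is $\I$-flat (so pullbacks, being exact sequences themselves, are automatically $\I$-flat). The only difference is that you spell out step~(iii) explicitly via the short five lemma, whereas the paper simply appeals to the remark after \cref{finalstep} asserting that $(1)\Rightarrow(2)$ holds without the regular-epi hypothesis.
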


\begin{proof}
    We consider the functor $\I$ defined in \cref{I} which sends any crossed module $(N_1,N_2,\partial^\N)$ to $(N_2,N_2,Id_{N_2})$. This functor is not regular-epi because if we consider a crossed module for which the connecting morphism is not surjective then the localization morphism will not be a regular epimorphism.  

    We prove now that $\I$ is conditional flat. Let 
\begin{tikzpicture}[descr/.style={fill=white},scale= 0.8, baseline=(A.base)] 
\node (A) at (0,0) {$\T$};
\node (B) at (2.5,0) {$\Q$};
\node (C) at (-2.5,0) {$\N$};
\node (O1) at (-4.5,0) {$1$};
\node (O2) at (4.5,0) {$1$};
\path[-stealth,font=\scriptsize]
(C.east) edge node[above] {$\kappa$} (A.west)
(O1) edge node[above] {$ $} (C)
(B) edge node[above] {$ $} (O2)
(A) edge node[above] {$\alpha$} (B);
\end{tikzpicture}
    be any exact sequence of crossed modules. We see that $\I((N_1,N_2,\partial^\N)) = (N_2,N_2,Id_{N_2})$ is a normal subcrossed module of $ (T_2,T_2,Id_{T_2}) = \I((T_1,T_2,\partial^\T)$ and that $\I((Q_1,Q_2,\partial^\Q))= (Q_2,Q_2,Id_{Q_2})$ is the cokernel of $\kappa \colon \N \to \T$. Therefore any exact sequence of crossed modules is $\I$-flat. In particular any pullback along any morphism of crossed modules of an $\I$-flat exact sequence is $\I$-flat, hence $\I$ is conditionally flat.
\end{proof}

\printbibliography
\end{document}